\newtheorem{rmk}{\textit{Remark}}[section]
\newtheorem{theorem}{Theorem}[section]
\newtheorem{proposition}[theorem]{Proposition}
\theoremstyle{remark}
\theoremstyle{definition}
\title{Domain decomposition algorithms for the two dimensional nonlinear Schr{\"o}dinger equation and simulation of Bose-Einstein condensates}
\author[1]{Christophe Besse\thanks{christophe.besse@math.univ-toulouse.fr}}
\author[2]{Feng Xing\thanks{feng.xing@unice.fr}}
\affil[1]{Institut de Math{\'e}matiques de Toulouse UMR5219,
  Universit\'e de Toulouse; CNRS,
  UPS IMT, F-31062 Toulouse Cedex 9, France.}
\affil[2]{Laboratoire de Math\'ematiques J.A. Dieudonn\'e, UMR 7351 CNRS, University Nice Sophia Antipolis, team COFFEE, INRIA Sophia Antipolis M\'editerran\'ee, Parc Valrose 06108 Nice Cedex 02, France, and BRGM Orl\'eans France }
\begin{document}

\maketitle

\begin{abstract}

  In this paper, we apply the optimized Schwarz method to the two
  dimensional nonlinear Schr\"{o}dinger equation and extend this method
  to the simulation of Bose-Einstein condensates (Gross-Pitaevskii
  equation). We propose an extended version of the Schwartz method by
  introducing a
  preconditioned algorithm. The two algorithms are studied
  numerically. The experiments show that the preconditioned algorithm
  improves the convergence rate and reduces the computation time. In addition, the
  classical Robin condition and a newly constructed absorbing
  condition are used as transmission conditions. 

\end{abstract}

\textbf{Keywords}.  nonlinear Schr\"{o}dinger equation, rotating Bose–Einstein condensate, optimized Schwarz method, preconditioned algorithm, parallel algorithm\\

\textbf{Math. classification.} 35Q55; 65M55; 65Y05; 65M60.

\section{Introduction}

We are interested in solving the nonlinear Schr\"{o}dinger equation
and the Gross-Pitaevskii (GPE) equation by the optimized Schwarz
method. A large number of articles and books
\cite{Gander2012dd,Gander2008history, Gander2006osw} are devoted to
this method for different kinds of equations, for example the Poisson
equation \cite{Lions1990}, the Helmholtz equation
\cite{Boubendir2012,Gander2002helm} and the convection-diffusion
equation \cite{Nataf1994a}. Recently, the authors of
\cite{Halpern2010_sch,Antoine2014, XF20151d, XF20152dlin} applied the
domain decomposition method to the linear or nonlinear Schr\"{o}dinger
equation. More specificaly, in \cite{XF20151d, XF20152dlin}, the
authors proposed some newly efficient and scalable Schwarz methods for
1d or 2d linear Schr\"odinger equation and for 1d nonlinear
Schr\"odinger equation. These new algorithms could ensure high
scalability and reduce computation time. In this paper, we extend
these works to the two dimensional nonlinear case. 

The nonlinear Schr\"{o}dinger equation defined on a two dimensional bounded spatial domain $\Omega :=(x_l,x_r) \times (y_b,y_u)$, $x_l,x_r,y_b,y_u \in \mathbb{R}$ and $t\in(0,T)$ with general real potential $V(t,x,y) + f(\cdot)$ reads
\begin{equation}
  \label{Sch}
  \left
    \{\begin{array}{ll}
        i \partial_t u  + \Delta u + V(t,x,y) u + f(u)u = 0, \ (t,x,y) \in (0,T)\times \Omega, \\ 
        u(0,x,y) = u_0(x,y),
      \end{array} 
    \right.
  \end{equation}
  where $u_0 \in L^2(\Omega)$ is the initial datum. We complement the
  equation with homogeneous Neumann boundary condition on 
  bottom and top boundaries and Fourier-Robin boundary conditions
  on left and right boundaries. They read:
  \begin{equation}
    \label{BC}
    \partial_{\mathbf{n}} u= 0,\ (x,y)\in(x_l,x_r)\times\{y_b,y_u\},\quad \partial_{\mathbf{n}} u + S u = 0,\  (x,y)\in \{x_l,x_r\}\times(y_b,y_u),
  \end{equation}
  where $\partial_{\mathbf{n}}$ denotes the normal directive, $\mathbf{n}$ being the outwardly unit vector on the boundary $\partial \Omega$ (see Figure \ref{figomega}). The operator $S$ is given by
  \begin{align}
    & S u = -ip\cdot u, \ p \in \mathbb{R}^+, \label{BCRobin} \\
    \mathrm{or} \ & S u = -i \sqrt{i\partial_t + \Delta_{\Gamma} + V + f(u)} u, \label{BCS2p}
  \end{align}
  where $\Gamma=\{x_l,x_r\} \times (y_b,y_u)$. The Laplace–Beltrami
  operator $\Delta_{\Gamma}$ is $\partial_y^2$ in our case. 
  The operator $S$ in \eqref{BCS2p} is a pseudo differential operator
  constructed recently in \cite{Antoine2012} as an absorbing boundary
  operator, which is used to approximate the exact solution of the
  problem defined on $\mathbb{R}^2$, restricted to a bounded space
  domain.  
  
  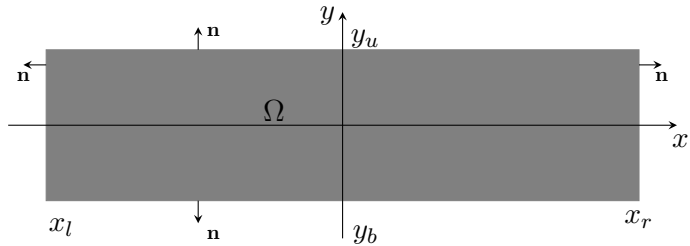
\begin{figure}[!htbp]
    \centering
    \begin{tikzpicture}
      \draw [color=gray, fill=gray] (0,0) rectangle (7.8,2);
      \draw[>=stealth,->] (-0.5,1) -- (8.3,1);
      \draw (8.1,0.8) node[right] {$x$};
      \draw [>=stealth,->] (3.9,-0.5) -- (3.9,2.5);
      \draw (3.7,2.2) node[above] {$y$};    
      
      \draw[>=stealth,->] (0,1.8) -- (-0.3,1.8);
      \draw (-0.3,1.8) node[below,scale=0.7] {$\mathbf{n}$};

      \draw[>=stealth,->] (7.8,1.8) -- (8.1,1.8);
      \draw (8.1,1.8) node[below,scale=0.7] {$\mathbf{n}$};    
      
      \draw[>=stealth,->] (2,2) -- (2,2.3);
      \draw (2.2,2.4) node[below,scale=0.7] {$\mathbf{n}$};      
      
      \draw[>=stealth,->] (2,0) -- (2,-0.3);
      \draw (2.2,-0.3) node[below,scale=0.7] {$\mathbf{n}$};       
      
      \draw (0.2,-0.1) node[below] {$x_l$};
      \draw (7.8,0) node[below] {$x_r$};
      \draw (4.2,-0.2) node[below] {$y_b$};
      \draw (4.2,2.4) node[below] {$y_u$};
      \draw (3.0,1.2) node[scale=0.8] {\Large $\Omega$};
    \end{tikzpicture}
    \caption{Spatial bounded domain $\Omega=(x_l,x_r) \times (y_b,y_u)$.}
    \label{figomega}
  \end{figure}

  Recently, the Schwarz
  algorithms have been applied to the one dimensional linear or nonlinear Schr\"{o}dinger
  equation \cite{XF20151d}. If the potential is linear and independent
  of time, then an interface problem allows to
  construct a global in space operator. It is possible to
  assemble it in parallel without too much computational efforts. Thanks
  to this operator, a new algorithm was introduced which is
  mathematically equivalent to the original Schwarz algorithm, but
  requires less iterations and 
  computation time. If the potential is general, the authors
  used a pre-constructed linear operator as preconditioner, which leads
  to a preconditioned algorithm. The preconditioner allows to reduce both the
  number of iterations and the computation time. These new algorithms
  have been extended to the two dimensional linear Schr\"{o}dinger
  equation \cite{XF20152dlin}, which also shows the effectiveness of the new algorithms. In this
  article, we propose to extend the results to the 2d nonlinear case and to
  the simulation of Bose-Einstein condensates. Following the naming in
  \cite{XF20151d}, we refer to the algorithm given by \eqref{algoglobal}
  as ``the classical algorithm'' and to the algorithm that will be presented in
  Section \ref{Sec_Palgo} as ``the preconditioned algorithm''. 

  The paper is organized as follows. We present in section \ref{Sec_cls} the the classical algorithm and some details about the discretization. A preconditioned algorithm is presented in section \ref{Sec_Palgo}. In section \ref{Sec_Imp}, we present the implementation of these algorithms on parallel computers. Numerical experiments are performed in section \ref{Sec_Num} and we focused on simulation of Bose-Einstein condensates. The last section draws a conclusion and suggests some future directions of research.

  \section{Classical algorithm}
  \label{Sec_cls}
  
  \subsection{Classical optimized Schwarz algorithm}	

  Let us discretize uniformly with $N_T$ intervals the time domain $(0,T)$. We define $\Delta t = T/N_T$ to be the time step. The usual semi-discrete in time 
  scheme developed by Durán and Sanz-Serna \cite{DUR2000} applied to \eqref{Sch} reads as
  \begin{displaymath}
    \begin{split}
      i\frac{u_{n} - u_{n-1}}{\Delta t} +  \Delta \frac{u_{n} + u_{n-1}}{2} + \frac{V_{n} + V_{n-1}}{2} \frac{u_{n} + u_{n-1}}{2} 
      +  f(\frac{u_{n} + u_{n-1}}{2}) \frac{u_{n} + u_{n-1}}{2} = 0, & \quad 1 \leqslant n \leqslant N_T
    \end{split}
  \end{displaymath} 
  where $u_{n}(x,y), (x,y) \in \Omega$ denotes the approximation of the solution $u(t_n,x,y)$ to the Schr\"{o}dinger equation \eqref{Sch} at time $t_n = n \Delta t$ and $V_{n}(x,y)=V(t_n,x,y)$. By introducing new variables
  $v_{n} = (u_{n} + u_{n-1})/2$ with $v_{0} = u_{0}$ and $W_{n}=(V_{n}+V_{n-1})/2$, this scheme can be written as
  \begin{equation}
    \label{CNS}
    \mathscr{L}_{\mathbf{x}} v_{n} = 2i\frac{u_{n-1}}{\Delta t},
  \end{equation}
  where the operator $\mathscr{L}_{\mathbf{x}}$ is defined by
  \begin{displaymath}
    \mathscr{L}_{\mathbf{x}} v_{n} := \frac{2i}{\Delta t} v_{n} + \Delta v_{n} + W_{n} v_{n} + f(v_{n}) v_{n}.
  \end{displaymath}
  For any $1\leq n \leq N_T$, the equation \eqref{CNS} is stationary. We can therefore apply the optimized Schwarz method. Let us decompose the spatial domain $\Omega$ into $N$ subdomains $\Omega_j =(a_j,b_j),j=1,2,...,N$ without overlap as shown in Figure \ref{figsub3} for $N=3$. The Schwarz algorithm is an iterative process and we identify the iteration number thanks to label $k$. We denote by $v_{j,n}^k$ the solution on subdomain $\Omega_j$ at iteration $k=1,2,...$ of the Schwarz algorithm (resp $u_{j,n}^k$). Assuming that $u_{0,n-1}$ is known, the optimized Schwarz algorithm for \eqref{CNS} consists in applying the following sequence of iterations for $j=2,3,...,N-1$
  \begin{equation}
    \label{algoglobal}
    \left \{
      \begin{array}{l}
        \displaystyle   \mathscr{L}_{\mathbf{x}}  v_{j,n}^{k} = \frac{2i}{\Delta t} u_{j,n-1}, \ (x,y) \in \Omega_j,  \\[2mm]
        \partial_{\mathbf{n}_j} v^{k}_{j,n} + \overline{S}_j v^{k}_{j,n} = \partial_{\mathbf{n}_{j}} v^{k-1}_{j-1,n} + \overline{S}_j v^{k-1}_{j-1,n} , \ x=a_j,\ y \in (y_b,y_u),\\[2mm]
        \partial_{\mathbf{n}_j} v^{k}_{j,n} + \overline{S}_j v^{k}_{j,n} = \partial_{\mathbf{n}_{j}} v^{k-1}_{j+1,n} + \overline{S}_j v^{k-1}_{j+1,n}, \ x=b_j, \ y \in (y_b,y_u),
      \end{array} \right.
  \end{equation}
  with a special treatment for the two extreme subdomains
  $\Omega_1$ and $\Omega_N$ since
  the boundary conditions are imposed on $\{x=a_1\}\times(y_b,y_u)$ and $\{x=b_N\}\times(y_b,y_u)$ 
  \begin{displaymath}
    \partial_{\mathbf{n}_1} v^{k}_{1,n} + \overline{S}_j v^{k}_{1,n} = 0, x=a_j, \quad 
    \partial_{\mathbf{n}_N} v^{k}_{N,n} + \overline{S}_j v^{k}_{N,n} = 0, \ x=b_N.
  \end{displaymath} 

          %
  \definecolor{col1}{rgb}{0.8,1,0.8}
  \definecolor{col2}{rgb}{1,0.8,0.8}
  \definecolor{col3}{rgb}{0.8,0.8,1}
  \begin{figure}[!htbp]
    \centering
    \begin{tikzpicture}
      \draw [color=gray,fill=col1] (0,0) rectangle (2.6,2);
      \draw [color=gray,fill=col2] (2.6,0) rectangle (5.2,2);
      \draw [color=gray,fill=col3] (5.2,0) rectangle (7.8,2);
      \draw[>=stealth,->] (-0.5,1) -- (8.3,1);
      \draw (8.1,0.8) node[right] {$x$};
      \draw [>=stealth,->] (3.9,-0.5) -- (3.9,2.5);
      \draw (3.7,2.2) node[above] {$y$};    
      
      \draw[>=stealth,->] (2.6,1.8) -- (2.3,1.8);
      \draw (2.2,1.8) node[below,scale=0.7] {$\mathbf{n}_2$};

      \draw[>=stealth,->] (5.2,1.8) -- (5.5,1.8);
      \draw (5.6,1.8) node[below,scale=0.7] {$\mathbf{n}_2$};    
      
      \draw (0.2,-0.1) node[below] {$x_l=a_1$};
      \draw (2.6,0) node[below] {$b_1=a_2$};
      \draw (5.2,0) node[below] {$b_2=a_3$};
      \draw (7.8,0) node[below] {$b_2=x_r$};
      \draw (1.0,1.3) node[scale=0.8] {$\Omega_1$};
      \draw (3.6,1.3) node[scale=0.8] {$\Omega_2$};
      \draw (6.1,1.3) node[scale=0.8] {$\Omega_3$};
    \end{tikzpicture}
    \caption{Domain decomposition without overlap, $N=3$.}
    \label{figsub3}
  \end{figure}
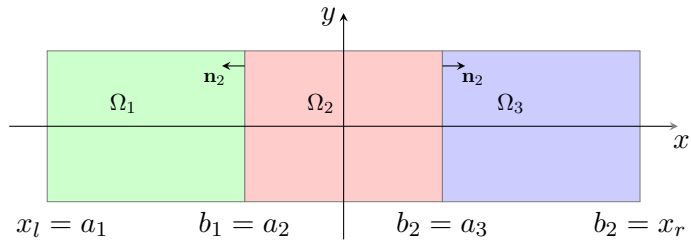

  Various transmission operator $\overline{S}$ can be considered. The first one is the classical widely used Robin transmission condition
  \begin{equation}
    \label{TCSRobin}
    \mathrm{Robin:} \quad \overline{S}_j v = -ip \cdot v, \ p \in \mathbb{R}^+.
  \end{equation}
  Traditionally, the optimal transmission operator is
  given in term of transparent boundary conditions (TBCs). For the
  nonlinear two dimensional Schr\"odinger equation, we only have access to
  approximated version of the TBCs given by the recently
  constructed absorbing boundary condition
  $S_{\mathrm{pade}}^m$ \cite{Antoine2012,Antoine2013}
  which we used as the transmission condition
  \begin{equation}
    \label{TCS}
    S_{\mathrm{pade}}^m: \quad \overline{S}_j v = -i \sum_{s=0}^m a_s^m v + i\sum_{s=1}^m a_s^m d_s^m \varphi_{j,s},  \ x=a_j,b_j.
  \end{equation}
  The operator $S_{\mathrm{pade}}^m$ \cite{Antoine2012, Antoine2013} is originally constructed by using some pseudo differential techniques. Numerically it is 
  approximated by Pad\'e approximation of order $m$
  \begin{displaymath}
    S_{\mathrm{pade}}^m v = -i \sqrt{\frac{2i}{\Delta t} + \Delta_{\Gamma_j} + W + f(v) } v \approx
    \Big( -i\sum_{s=0}^m a_s^m + i\sum_{s=1}^m a_s^m d_s^m (\frac{2i}{\Delta t} + \Delta_{\Gamma_j} + W + f(v) + d_s^m)^{-1} \Big) v,
  \end{displaymath}
  where $\Gamma_j=\{x=a_j,b_j\} \times (y_b,y_u)$. The Laplace-Beltrami operator $\Delta_{\Gamma_j}$ is $\partial_{yy}$ in our case and the constant coefficients are $a_s^m = e^{i\theta/2}/\big(m\cos^2(\frac{(2s-1)\pi}{4m}) \big)$, $d_s^m = e^{i\theta} \tan^2 (\frac{(2s-1)\pi}{4m})$, $s=0,1,...,m$, $\theta = \frac{\pi}{4}$.  
  The auxiliary functions $\varphi_{j,s}$, $j=1,2,...,N$,$ s=1,2,...,m$ are defined as solution of the set of equations 
  \begin{equation}
  \label{TCS_phi}
    \Big( \frac{2i}{\Delta t} + \Delta_{\Gamma_j} + W + f(v) + d_s^m \Big) \varphi_{j,s}(x,y) = v,  \ (x,y) \in (a_j,b_j)\times(y_b,y_u).
  \end{equation}
  
  Let us introduce the fluxes $l_{j,n}^k$ and $r_{j,n}^k$ defined as
  \begin{displaymath}
    \begin{split}
      l_{j,n}^k(y) & = \partial_{\mathbf{n}_j} v^{k}_{j,n}(a_j,y) + \overline{S}_j v^{k}_{j,n} (a_j,y), \ y \in (y_b,y_u),\\
      r_{j,n}^k(y) & = \partial_{\mathbf{n}_j} v^{k}_{j,n}(b_j,y) + \overline{S}_j v^{k}_{j,n} (b_j,y), \ y \in (y_b,y_u),
    \end{split}
  \end{displaymath}
  with a special definition for the two extreme subdomains:
  $l_{1,n}^k = r_{N,n}^k=0$. Thus, the algorithm \eqref{algoglobal}
  can be splitted into local problems on subdomains
  $\Omega_j,j=1,2,...,N$  
  \begin{equation}
    \label{algolocal}
    \left \{
      \begin{array}{l}
        \displaystyle   \mathscr{L}_{\mathbf{x}}  v_{j,n}^{k} = \frac{2i}{\Delta t} u_{j,n-1}, \\[2mm]
        \partial_{\mathbf{n}_j} v^{k}_{j,n} + \overline{S}_j v^{k}_{j,n} = l_{j,n}^k, \ x=a_j,\\[2mm]
        \partial_{\mathbf{n}_j} v^{k}_{j,n} + \overline{S}_j v^{k}_{j,n} = r_{j,n}^k, \ x=b_j,
      \end{array} \right.
  \end{equation}
  and flux problems 
  \begin{equation}
    \label{fluxlr}
    \left \{
      \begin{array}{l}
        l_{j,n}^{k+1} = -r_{j-1,n}^{k} + 2 \overline{S}_j v_{j-1,n}^k(b_{j-1},y), \ j=2,3,...,N,\\[2mm]
        r_{j,n}^{k+1} = -l_{j+1,n}^{k} + 2 \overline{S}_j v_{j+1,n}^k(a_{j+1},y),\ j=1,2,...,N-1.
      \end{array} \right. 
  \end{equation}

  \subsection{Preliminaries related to space discretization}    
  
  Without loss of generality, we present the space discretization of
  the semi-discrete Schr\"odinger equation defined on the bounded
  domain $(a,b) \times (y_b,y_u), a,b\in \mathbb{R}$ 
  \begin{equation}
    \label{eqdist}
    \left \{
      \begin{array}{l}
        \displaystyle \frac{2i}{\Delta t} v + \Delta v + W(x,y) v + f(v) v = \frac{2i}{\Delta t} h(x,y),  \\[2mm]
        \partial_{\mathbf{n}} v + \overline{S} v = l(x,y) , \ x=a,\ y \in (y_b,y_u),\\[2mm]
        \partial_{\mathbf{n}} v + \overline{S} v = r(x,y), \ x=b, \ y \in (y_b,y_u),\\[2mm]
        \partial_{\mathbf{n}} v = 0, \ y=y_b, \ y=y_u,
      \end{array} \right.
  \end{equation}
  where $W(x,y)v+f(v)v$ plays the role of the semi-discrete potential in
  \eqref{CNS} and $l(x,y)$, $r(x,y)$ are two functions. The operator
  $\overline{S}$ is Robin or $S_{\mathrm{pade}}^m$ given respectively by
  \eqref{TCSRobin} and \eqref{TCS}.
                                 %
                                 %

  If $f\neq 0$, then the system \eqref{eqdist} is nonlinear. The computation of $v$ is accomplished by a fixed point procedure. If we consider the Robin transmission condition, we take $\zeta^{0}=h$ and compute the solution $v$ as the limit of the iterative procedure with respect to the label $q$, $q=1,2,...$
  \begin{equation}
    \label{localpbrobin}
    \left \{
      \begin{array}{l}
        \displaystyle  \Big( \frac{2i}{\Delta t} + \Delta + W + f(\zeta^{q-1}) \Big) \zeta^q = \frac{2i}{\Delta t} h, \\[2mm]
        \partial_{\mathbf{n}} \zeta^q - ip \cdot \zeta^q = l, \ x=a,\\[2mm]
        \partial_{\mathbf{n}} \zeta^q -ip \cdot \zeta^q = r, \ x=b.
      \end{array} \right.
  \end{equation}
  For the transmission condition $S_{\mathrm{pade}}^m$, we take
  $\zeta^{0}=h$ and $\phi^{0}_{s}=0$, $s=1,2,...,m$. The
  unknowns $v$ and $\varphi_{s},s=1,2,...,m$ are computed as the
  limit (with respect to $q$) of $\zeta^{q}$ and $\phi^{q}_{s}$,
  $s=1,2,\cdots,m$, which are solutions
  of the following coupled system 
  \begin{equation}
    \label{localpbS2p}
    \left \{
      \begin{array}{l}
        \displaystyle  \big( \frac{2i}{\Delta t} + \Delta + W_{n} + f(\zeta^{q-1} \big) \zeta^q = \frac{2i}{\Delta t} h, \\[2mm]
        \displaystyle \big( \frac{2i}{\Delta t} + W + \Delta_{\Gamma} + d_s^m \big) \phi^{q}_{s} = \zeta^{q} - f(\zeta^{q-1} ) \phi^{q-1}_{s}, \\
        \displaystyle   \partial_{\mathbf{n}} \zeta^q -i \sum_{s=0}^m a_s^m \zeta^{q} + i\sum_{s=1}^m a_s^m d_s^m \phi^{q}_{s} = l, \ x=a,\\[2mm]
        \displaystyle  \partial_{\mathbf{n}} \zeta^q -i \sum_{s=0}^m a_s^m \zeta^{q} + i\sum_{s=1}^m a_s^m d_s^m \phi^{q}_{s} = r, \ x=b.
      \end{array} \right.
  \end{equation}
          %
  
  The spatial approximation is realized with the standard $\mathbb{Q}_1$
  finite element method with uniform mesh. The mesh size of a discrete
  element is $(\Delta x, \Delta y)$. We denote by $N_x$ (resp. $N_y$)
  the number of nodes in $x$ (resp. $y$) direction on each
  subdomain. Let us denote by $\mathbf{v}$ (resp. $\mathbf{h}$) the
  nodal interpolation vector of $v$ (resp. $h$), $\bm{\zeta}^{q}$ the
  nodal interpolation vector of $\zeta^{q}$, $\mathbf{l}$
  (resp. $\mathbf{r}$) the nodal interpolation vector of $l(x,y)$
  (resp. $r(x,y)$), $\mathbb{M}$ the mass matrix, $\mathbb{S}$ the
  stiffness matrix and $\mathbb{M}_{W}$ the generalized mass matrix with
  respect to $\int_{\Omega} W v \phi dx$. Let $\mathbb{M}^{\Gamma}$ the
  boundary mass matrix, $\mathbb{S}^{\Gamma}$ the boundary stiffness
  matrix and $\mathbb{M}^{\Gamma}_{W}$ (resp. $\mathbb{M}^{\Gamma}_{W}$)
  the generalized boundary mass matrix with respect to $\int_{\Gamma} W
  v \phi d\Gamma$ (resp. $\int_{\Gamma} f(v) \phi d\Gamma$). We denote
  by $Q_{l}$ (resp. $Q_{r}$) the restriction operators (matrix) from
  $\Omega$ to $\{a\} \times (y_b,y_u)$ (resp. $\{b\} \times (y_b,y_u)$)
  and $Q^{\top}=(Q_{l}^{\top}, Q_{r}^{\top})$ where ``$\cdot^\top$'' is
  the standard notation of the transpose of a matrix or a vector. The
  matrix formulation for \eqref{localpbrobin} is therefore given by 
  \begin{equation}
    \label{NproblemRobinNL}
    \quad \Big( \mathbb{A} + ip \cdot \mathbb{M}^{\Gamma} + \mathbb{M}^{\Gamma}_{f(\bm{\zeta}^{q-1})}\Big)
    \bm{\zeta}^{q} =  \frac{2i}{\Delta t} \mathbb{M}
    \mathbf{h} - \mathbb{M}^{\Gamma}  Q^{\top} 
    \begin{pmatrix}
      \mathbf{l}\\
      \mathbf{r}
    \end{pmatrix},   
  \end{equation}
  where $\mathbb{A} = \frac{2i}{\Delta t} \mathbb{M} - \mathbb{S} + \mathbb{M}_{W}$. The size of this linear system is $N_x \times N_y$. If we consider the transmission condition $S_{\mathrm{pade}}^m$, we have
  \begin{gather}
    \label{NproblemSPNL}
    \mathcal{A} \begin{pmatrix}
      \bm{\zeta}^{q}\\
      \bm{\phi}^{q}_{1}\\
      \bm{\phi}^{q}_{2}\\
      \vdots\\
      \bm{\phi}^{q}_{m}
    \end{pmatrix} 
    :=
    \begin{pmatrix}
      \mathbb{A} + i(\sum_{s=1}^m a_s^m)  \cdot
      \mathbb{M}^{\Gamma} & \mathbb{B}_{1} & \mathbb{B}_{2} & \cdots & \mathbb{B}_{m} \\
      \mathbb{C} & \mathbb{D}_{1} \\
      \mathbb{C} & & \mathbb{D}_{2} \\
      \vdots & & & \ddots \\
      \mathbb{C} & &  & & \mathbb{D}_{m}
    \end{pmatrix}
    \begin{pmatrix}
      \bm{\zeta}^{q}\\
      \bm{\phi}^{q}_{1}\\
      \bm{\phi}^{q}_{2}\\
      \vdots\\
      \bm{\phi}^{q}_{m}
    \end{pmatrix}  \nonumber  \\
    = \frac{2i}{\Delta t}
    \begin{pmatrix}
      \mathbb{M} \mathbf{h}\\
      \mathbf{0} \\
      \mathbf{0} \\
      \vdots\\ 
      \mathbf{0}       
    \end{pmatrix}
    - 
    \begin{pmatrix}
      \mathbb{M}_{f(\bm{\zeta^{q-1}})} \bm{\zeta^{q-1}}\\
      \mathbb{M}_{f(\bm{\zeta^{q-1}})}^{\Gamma} \bm{\phi}^{q-1}_{1}\\
      \vdots\\
      \mathbb{M}_{f(\bm{\zeta_j^{q-1}})}^{\Gamma} \bm{\phi}^{q-1}_{m}\\
    \end{pmatrix}
    -
    \begin{pmatrix}
      \mathbb{M}^{\Gamma} \cdot Q^{\top}
      \begin{pmatrix}
        \mathbf{l}\\
        \mathbf{r}
      \end{pmatrix}\\
      0\\
      \vdots \\
      0
    \end{pmatrix}.
  \end{gather}                 
  with
  \begin{align*}
    & \mathbb{B}_{s} = -i a_s^m d_s^m \mathbb{M}^{\Gamma} Q^{\top} , \ 1
      \leqslant s \leqslant m,\\
    & \mathbb{C} = - Q \mathbb{M}^{\Gamma}, \\
    & \mathbb{D}_{s} = Q (\frac{2i}{\Delta
      t} \mathbb{M}^{\Gamma} -  \mathbb{S}^{\Gamma} +
      \mathbb{M}^{\Gamma}_{W} + d_s^m \mathbb{M}^{\Gamma}) Q^{\top},  \ 1
      \leqslant s \leqslant m.
  \end{align*}      
  It is a linear system with unknown $(\bm{\zeta}^q, \bm{\phi}^{q}_{1}, ..., \bm{\phi}^{q}_{m})$ where $\bm{\phi}^{q}_{s}$ is the nodal interpolation of $\phi^{q}_{s}$ on the boundary and $\bm{\varphi}_{s}$ is the nodal interpolation of $\varphi_{s}$. The vectors $\mathbf{v}$ and $\bm{\varphi}_{s}$ are computed by
  \begin{equation*}
    \mathbf{v} = \lim_{q \rightarrow \infty} \bm{\zeta}^{q}, \quad
    \bm{\varphi}_{s} = \lim_{q \rightarrow \infty} {\bm{\phi}}^{q}_{s}, \ s=1,2,...,m.
  \end{equation*}   
  In addition, the discrete form of the transmission operator $\overline{S}$ is given by
  \begin{equation}
    \label{TCSdist}
    \begin{split}
      & \mathrm{Robin:} \quad \mathbf{S} \mathbf{v} = -ip \cdot \mathbf{v}, \ p \in \mathbb{R}^+, \\
      & S_{\mathrm{pade}}^m: \quad \mathbf{S} \mathbf{v} = -i \sum_{s=0}^m a_s^m \mathbf{v} + i\sum_{s=1}^m a_s^m d_s^m \bm{\varphi}_{s}.
    \end{split}
  \end{equation}   
  
  \begin{rmk}
    The $S_{\mathrm{pade}}^m$ transmission condition involves a larger linear system to solve than the one of the Robin transmission condition. The cost of the algorithm with the $S_{\mathrm{pade}}^m$ transmission condition is therefore more expensive.
  \end{rmk}   
  
  If the potential is linear $f=0$, then from the system \eqref{eqdist} we have
  \begin{equation}
    \label{NproblemRobinL}
    \mathrm{Robin:} \quad\Big( \mathbb{A} + ip \cdot \mathbb{M}^{\Gamma} \Big)
    \mathbf{v} =  \frac{2i}{\Delta t} \mathbb{M}
    \mathbf{h} - \mathbb{M}^{\Gamma}  Q^{\top} 
    \begin{pmatrix}
      \mathbf{l}\\
      \mathbf{r}
    \end{pmatrix},   
  \end{equation}
  and
  \begin{gather}
    \label{NproblemSPL}
    S_{\mathrm{pade}:} \quad
    \mathcal{A} \begin{pmatrix}
      \mathbf{v}\\
      \bm{\varphi}_{1}\\
      \bm{\varphi}_{2}\\
      \vdots\\
      \bm{\varphi}_{m}
    \end{pmatrix} 
    = \frac{2i}{\Delta t}
    \begin{pmatrix}
      \mathbb{M} \mathbf{h}\\
      \mathbf{0} \\
      \mathbf{0} \\
      \vdots\\ 
      \mathbf{0}       
    \end{pmatrix}
    - 
    \begin{pmatrix}
      \mathbb{M}^{\Gamma} \cdot Q^{\top}
      \begin{pmatrix}
        \mathbf{l}\\
        \mathbf{r}
      \end{pmatrix}\\
      0\\
      \vdots \\
      0
    \end{pmatrix}.
  \end{gather}   
  Directly from the definition of $\mathcal{A}$, \eqref{NproblemSPL} can
  be written as one equation for $\mathbf{v}$ 
  \begin{equation}
    \Big( \mathbb{A} + i(\sum_{s=1}^m a_s^m)  \cdot \mathbb{M}^{\Gamma}
    - \sum_{s=1} \mathbb{B}_s \mathbb{D}_s^{-1} \mathbb{C}_s \Big) \mathbf{v} 
    =
    \frac{2i}{\Delta t} \mathbb{M}
    \mathbf{h} - \mathbb{M}^{\Gamma}  Q^{\top} 
    \begin{pmatrix}
      \mathbf{l}\\
      \mathbf{r}
    \end{pmatrix}.  
  \end{equation}  
  Note that numerically, we implement \eqref{NproblemRobinL} and \eqref{NproblemSPL} to compute $\mathbf{v}$ (and $\bm{\varphi}_{s}$, $s=1,2,...,m$).

  \subsection{Classical discrete algorithm}
  
  Following what is done for the complete domain $\Omega \times [0,T]$,
  we discretize the equations \eqref{algolocal} and \eqref{fluxlr} on each subdomain $\Omega_j$ at each time step $n=1,2,...,N_T$. Accordingly, on each subdomain $\Omega_j$, let us denote by
  \begin{itemize}
  \item $\mathbb{A}_{j,n} = \frac{2i}{\Delta t}\mathbb{M}_{j} - \mathbb{S}_{j} + \mathbb{M}_{j,W_n}$ where $\mathbb{M}_{j}$ is the mass matrix, $\mathbb{S}_{j}$ is the stiffness matrix, $\mathbb{M}_{j,W_n}$ is the generalized mass matrix with respect to $\int_{\Omega_j} W_n v \phi dx$,
  \item $\mathbb{M}^{\Gamma_j}_{W_n}$ the generalized boundary mass matrix with respect to $\int_{\Gamma_j} W_n v \phi d\Gamma$,
    $\mathbb{M}^{\Gamma_j}_{f}$ the generalized boundary mass matrix with respect to $\int_{\Gamma_j} f(v) \phi d\Gamma$ where $\Gamma_j=\{x=a_j,b_j\}\times(y_b,y_u)$, 
  \item $Q_{j,l}$ and $Q_{j,r}$ the restriction operators (matrix) from $\Omega_j$ to its boundary $\{a_j\} \times (y_b,y_u)$ and $\{b_j\} \times (y_b,y_u)$ respectively, 
    $Q_j^{\top}=(Q_{j,l}^{\top}, Q_{j,r}^{\top})$,
  \item $\mathbb{B}_{j,s}$, $\mathbb{C}_{j}$, $\mathbb{D}_{j,n,s}$ the matrix associated with the operator $S_{\mathrm{pade}}^m$,
  \item $\mathbf{v}_{j,n}^k$ (resp. $\mathbf{u}_{j,n}^k$) the interpolation vectors of ${v}_{j,n}^k$ (resp. ${u}_{j,n}^k$).
  \end{itemize}   
  We denote by $\mathbf{l}_{j,n}^k$ (resp. $\mathbf{r}_{j,n}^k$) the nodal interpolation vector of $l_{j,n}^k$ (resp. $r_{j,n}^k$).  The classical algorithm is initialized by an initial
  guess of $\mathbf{l}_{j,n}^0$ and $\mathbf{r}_{j,n}^0$, $j=1,2,...,N$. The
  boundary conditions for any subdomain $\Omega_j$ at
  iteration $k+1$ involve the knowledge of the values of
  the functions on adjacent subdomains $\Omega_{j-1}$ and
  $\Omega_{j+1}$ at prior iteration $k$. Thanks to the
  initial guess, we can \emph{solve} the Schr\"odinger equation on
  each subdomain, allowing to build the new boundary
  conditions for the next step, \emph{communicating} them to other
  subdomains. This procedure is summarized 
  in \eqref{AlgoGraph} for $N=3$ subdomains at iteration
  $k$.
  \begin{equation}
    \label{AlgoGraph}
    \begin{pmatrix}
      \mathbf{r}_{1,n}^k \\
      \mathbf{l}_{2,n}^k \\
      \mathbf{r}_{2,n}^k \\
      \mathbf{r}_{3,n}^k
    \end{pmatrix}
    \underrightarrow{\hspace{0.1cm} \text{\footnotesize{\emph{Solve}}} \hspace{0.1cm}}
    \begin{pmatrix}
      \mathbf{v}_{1,n}^k \\
      \mathbf{v}_{2,n}^k \\
      \mathbf{v}_{3,n}^k \\
    \end{pmatrix}
    \underrightarrow{ }
    \begin{pmatrix}
      - \mathbf{r}_{1,n}^k + 2 \mathbf{S} ({Q}_{1,r} \mathbf{v}_{1,n}^k) \\
      - \mathbf{l}_{2,n}^k + 2 \mathbf{S} ({Q}_{2,l} \mathbf{v}_{j,n}^k) \\
      - \mathbf{r}_{2,n}^k + 2 \mathbf{S} ({Q}_{2,r} \mathbf{v}_{j,n}^k) \\
      - \mathbf{l}_{3,n}^k + 2 \mathbf{S} ({Q}_{3,l} \mathbf{v}_{N,n}^k)
    \end{pmatrix}
    \underrightarrow{ \hspace{0.1cm} \text{\footnotesize{\emph{Comm.}}} \hspace{0.1cm} }
    \begin{pmatrix}
      \mathbf{r}_{1,n}^{k+1} \\
      \mathbf{l}_{2,n}^{k+1} \\
      \mathbf{r}_{2,n}^{k+1} \\
      \mathbf{l}_{3,n}^{k+1} \\
    \end{pmatrix}.
  \end{equation}    
  Let us define the discrete interface vector by 
  \begin{displaymath}
    \mathbf{g}_n^{k,\top}=(\mathbf{r}_{1,n}^{k,\top},\cdots,\mathbf{l}_{j,n}^{k,\top},\mathbf{r}_{j,n}^{k,\top},\cdots,\mathbf{l}_{N,n}^{k,\top}).
  \end{displaymath}
  Thanks to this definition, we give a new interpretation to the algorithm which can be written as
  \begin{equation}
    \label{algo_d}
    \mathbf{g}_n^{k+1} = \mathcal{R}_{h,n} \mathbf{g}_n^k = I - (I - \mathcal{R}_{h,n}) \mathbf{g}_n^k.
  \end{equation} 
  where $I$ is identity operator and $\mathcal{R}_{h,n}$ is an operator. The solution to this iteration process is given as the
  solution to the discrete interface problem
  \begin{equation*}
    \label{interfacepb_d}
    ( I - \mathcal{R}_{h,n}) \mathbf{g}_n = 0.
  \end{equation*}  
  
  \begin{rmk}
    If $f=0$, the discretization of \eqref{algolocal} on each subdomain is
    \begin{gather}
      \mathrm{Robin:} \quad\Big( \mathbb{A}_{j,n} + ip \cdot \mathbb{M}^{\Gamma_j} \Big)
      \mathbf{v}_{j,n}^k =  \frac{2i}{\Delta t} \mathbb{M}
      \mathbf{u}_{j,n-1}^k - \mathbb{M}^{\Gamma_j}  Q^{\top} 
      \begin{pmatrix}
        \mathbf{l}_{j,n}^k\\
        \mathbf{r}_{j,n}^k
      \end{pmatrix},  \label{distrobin} \\
      S_{\mathrm{pade}:} \quad \Big( \mathbb{A}_{j,n} + i(\sum_{s=1}^m a_s^m)  \cdot \mathbb{M}^{\Gamma_j}
      - \sum_{s=1}^m \mathbb{B}_{j,s} \mathbb{D}_{j,n,s}^{-1} \mathbb{C}_{j} \Big) \mathbf{v}_{j,n}^k
      =
      \frac{2i}{\Delta t} \mathbb{M}
      \mathbf{u}_{j,n-1}^k - \mathbb{M}^{\Gamma_j}  Q^{\top} 
      \begin{pmatrix}
        \mathbf{l}_{j,n}^k\\
        \mathbf{r}_{j,n}^k
      \end{pmatrix}.  \label{dists2p}
    \end{gather}  
  \end{rmk}

  \section{Preconditioned algorithm}
  \label{Sec_Palgo}
  The application of the nonlinear operator $\mathcal{R}_{h,n}$ to $\mathbf{g}^k_n$ is expensive. In this section, we propose to add a preconditioner $P^{-1}$ in \eqref{algo_d}, which leads to a preconditioned algorithm
  \begin{equation}
    \label{algo_pd}
    \mathbf{g}_n^{k+1} = I - P^{-1}(I - \mathcal{R}_{h,n})\mathbf{g}_n^k.
  \end{equation}
  Here $P$ is a non singular matrix. To defined it,    
  let us consider the free Schr\"odinger equation with a zero potential $V=0$, $f=0$. We show in Propositions \ref{proprobin} and \ref{props2p} that in this case, the operator $\mathcal{R}_{h,n}$ is linear
  \begin{displaymath}
    \mathcal{R}_{h,n} \mathbf{g}_n^k = \mathcal{L}_h \mathbf{g}_n^k + \mathbf{d}_n,
  \end{displaymath}
  where $\mathcal{L}_h$ is a block matrix as defined by \eqref{Lmpi}
  and $\mathbf{d}_n$ is a vector (the notation ``MPI $j$'' is used
  in the next section). The matrix $\mathcal{L}_h$ is
  independent of the time step $n$. The size of each block $X^{k,l}$
  is $N_y
  \times N_y$. 
          %
  \begin{equation}
    \label{Lmpi}
    \mathcal{L}_{h} = 
    \begin{pmatrix}
      \multicolumn{1}{l}{\overbrace{\hspace{2.0em}}^{\mathrm{MPI}\ 0}} & 
      \multicolumn{2}{l}{\overbrace{\hspace{5.0em}}^{\mathrm{MPI}\ 1}} &
      \multicolumn{2}{l}{\overbrace{\hspace{5.0em}}^{\mathrm{MPI}\ 2}} &
      & 
      \multicolumn{2}{l}{\overbrace{\hspace{8.0em}}^{\mathrm{MPI}\ N-2}} &
      \multicolumn{2}{l}{\overbrace{\hspace{2.0em}}^{\mathrm{MPI}\ N-1}}
      \\
      & X^{2,1} & X^{2,2} & & & \\
      X^{1,4} \\
      & & & X^{3,1} & X^{3,2} \\
      & X^{2,3} & X^{2,4} \\
      & & & & & \cdots \\
      & & & X^{3,3} & X^{3,4} \\
      & & & & & & X^{N-1,1} & X^{N-1,2}\\
      & & & & &\cdots \\
      & & & & & & & & X^{N,1}\\
      & & & & & & X^{N-1,3} & X^{N-1,4}
    \end{pmatrix}.
  \end{equation}
  Thus, we propose here
  \begin{displaymath}
    P = I - \mathcal{L}_{h}.
  \end{displaymath}
  Note that since $\mathcal{L}_h$ is independent of time step $n$, the preconditioner is constructed once and used for all time steps.

  \begin{proposition}
    \label{proprobin}
    For the Robin transmission condition, assuming that $V=0$ and $f=0$, the matrix $\mathcal{L}_h$ takes the form \eqref{Lmpi} and $\mathcal{L}_h$ is independent of time step $n$. In addition, if the subdomains are equal, then
    \begin{equation}
      \label{blockeq}
      \begin{aligned}
        & X^{2,1} = X^{3,1} = \cdots = X^{N,1}, \ X^{2,2} = X^{3,2} = \cdots = X^{N-1,2},\\
        & X^{2,3} = X^{3,3} = \cdots = X^{N-1,3}, \ X^{1,4} = X^{2,4} = \cdots =   X^{N-1,4}.
      \end{aligned}
    \end{equation}
  \end{proposition}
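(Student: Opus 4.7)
The strategy is to trace the discrete Schwarz iteration in the linear, potential--free setting and read off the dependence of the outgoing fluxes on the incoming ones. I would proceed in four steps.

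First, I would write the local solve explicitly. With $V=0$ and $f=0$, equation \eqref{distrobin} on subdomain $\Omega_j$ reads
\begin{equation*}
\bigl(\mathbb{A}_{j,n}+ip\,\mathbb{M}^{\Gamma_j}\bigr)\mathbf{v}_{j,n}^{k}
=\tfrac{2i}{\Delta t}\mathbb{M}_j\mathbf{u}_{j,n-1}-\mathbb{M}^{\Gamma_j}Q_j^{\top}\begin{pmatrix}\mathbf{l}_{j,n}^{k}\\ \mathbf{r}_{j,n}^{k}\end{pmatrix}.
\end{equation*}
Setting $\mathbb{K}_j:=\bigl(\mathbb{A}_{j,n}+ip\,\mathbb{M}^{\Gamma_j}\bigr)^{-1}$ (which is well defined for $p>0$ since the operator is invertible), I get an affine relation
\begin{equation*}
\mathbf{v}_{j,n}^{k}= -\mathbb{K}_j\mathbb{M}^{\Gamma_j}Q_j^{\top}\begin{pmatrix}\mathbf{l}_{j,n}^{k}\\ \mathbf{r}_{j,n}^{k}\end{pmatrix}+\mathbf{w}_{j,n},
\qquad \mathbf{w}_{j,n}:=\tfrac{2i}{\Delta t}\mathbb{K}_j\mathbb{M}_j\mathbf{u}_{j,n-1}.
\end{equation*}
Since $V=0$ makes $\mathbb{A}_{j,n}=\tfrac{2i}{\Delta t}\mathbb{M}_j-\mathbb{S}_j$ independent of $n$, the linear part is $n$--independent; only $\mathbf{w}_{j,n}$ carries the time index.

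Second, I would plug this into the flux update \eqref{fluxlr}, discretized with $\mathbf{S}v=-ip\,v$:
\begin{equation*}
\mathbf{l}_{j,n}^{k+1}=-\mathbf{r}_{j-1,n}^{k}-2ip\,Q_{j-1,r}\mathbf{v}_{j-1,n}^{k},\qquad
\mathbf{r}_{j,n}^{k+1}=-\mathbf{l}_{j+1,n}^{k}-2ip\,Q_{j+1,l}\mathbf{v}_{j+1,n}^{k}.
\end{equation*}
Substituting the previous expression for $\mathbf{v}_{j\pm1,n}^{k}$ shows that $\mathbf{l}_{j,n}^{k+1}$ depends linearly only on $(\mathbf{l}_{j-1,n}^{k},\mathbf{r}_{j-1,n}^{k})$ (with a constant coming from $\mathbf{w}_{j-1,n}$), and $\mathbf{r}_{j,n}^{k+1}$ only on $(\mathbf{l}_{j+1,n}^{k},\mathbf{r}_{j+1,n}^{k})$. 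Collecting these relations in the ordering used for $\mathbf{g}_n^{k}$ yields $\mathcal{R}_{h,n}\mathbf{g}_n^{k}=\mathcal{L}_h\mathbf{g}_n^{k}+\mathbf{d}_n$ with the sparsity pattern of \eqref{Lmpi}; concretely the four entries produced by subdomain $\Omega_j$ (for $2\le j\le N-1$) are exactly the blocks $X^{j,1},\dots,X^{j,4}$, and the extreme subdomains produce only the single blocks $X^{1,4}$ and $X^{N,1}$, matching the figure. Independence of $n$ follows because every block is built from $\mathbb{K}_j$, $\mathbb{M}^{\Gamma_j}$, $Q_{j,\cdot}$ and $p$, none of which carries an $n$.

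Third, for the equal--subdomain case, I would invoke translation invariance. If all $\Omega_j$ are equal, then after identifying each $\Omega_j$ with a reference subdomain through the uniform mesh, the matrices $\mathbb{M}_j,\mathbb{S}_j,\mathbb{M}^{\Gamma_j},Q_{j,l},Q_{j,r}$ coincide for every $j$, and hence so does $\mathbb{K}_j$. Consequently the maps $(\mathbf{l}_{j,n}^k,\mathbf{r}_{j,n}^k)\mapsto Q_{j,l}\mathbf{v}_{j,n}^k$ and $(\mathbf{l}_{j,n}^k,\mathbf{r}_{j,n}^k)\mapsto Q_{j,r}\mathbf{v}_{j,n}^k$ are the same for all $j$. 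Reading off the four blocks $X^{j,1},\ldots,X^{j,4}$ from these identical maps gives the equalities \eqref{blockeq}; the only asymmetry left is the absence of an $X^{1,\cdot}$ column pair and an $X^{N,\cdot}$ row pair at the endpoints, which is exactly what the claim allows for.

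The only real delicacy is bookkeeping: one has to be careful to index the four ``slots'' $X^{j,1},\ldots,X^{j,4}$ consistently with the global vector $\mathbf{g}_n^k$ (left flux vs right flux, and producer index $j$ vs consumer index $j\pm 1$), and to verify that the endpoint rows involve the correct single block due to $\mathbf{l}_{1,n}\equiv 0$ and $\mathbf{r}_{N,n}\equiv 0$. Once this bookkeeping is made explicit the result is essentially a direct read--off from the two formulas above, so I do not expect a deep difficulty beyond the combinatorics of the interface ordering.
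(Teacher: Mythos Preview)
Your proposal is correct and follows essentially the same approach as the paper: both write the local solve \eqref{distrobin} as an affine map from $(\mathbf{l}_{j,n}^{k},\mathbf{r}_{j,n}^{k})$ to $\mathbf{v}_{j,n}^{k}$, substitute into the flux update \eqref{AlgoGraph}/\eqref{fluxlr} to read off the block structure and the vector $\mathbf{d}_n$, observe that $V=0$ makes $\mathbb{A}_{j,n}$ independent of $n$, and finally use that equal subdomains yield identical finite-element matrices to conclude \eqref{blockeq}. The paper additionally records the explicit block formulas \eqref{matXRobin} and the expression \eqref{dRobin} for $\mathbf{d}_n$, which you would obtain by carrying out the substitution you describe.
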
                            
  \begin{proof}
    First, by some straight forward calculations using \eqref{distrobin} and \eqref{AlgoGraph}, we can verify that
    \begin{equation}
      \label{matXRobin}
      \begin{aligned}
        & X^{j,1} =  -I - 2ip \cdot {Q}_{j,l} (\mathbb{A}_{j,n}
        +ip \cdot \mathbb{M}^{\Gamma_j})^{-1}
        \mathbb{M}^{\Gamma_{j}} {Q}_{j,l}^{\top}, \\
        & X^{j,2} =  - 2ip \cdot {Q}_{j,l} (\mathbb{A}_{j,n}
        +ip \cdot \mathbb{M}^{\Gamma_j})^{-1}
        \mathbb{M}^{\Gamma_{j}} {Q}_{j,r}^{\top}, \\
        & X^{j,3} =  - 2ip \cdot {Q}_{j,r} (\mathbb{A}_{j,n}
        +ip \cdot \mathbb{M}^{\Gamma_j})^{-1}
        \mathbb{M}^{\Gamma_{j}} {Q}_{j,l}^{\top}, \\
        & X^{j,4} =  -I - 2ip \cdot {Q}_{j,r} (\mathbb{A}_{j,n}
        +ip \cdot \mathbb{M}^{\Gamma_j})^{-1}
        \mathbb{M}^{\Gamma_{j}} {Q}_{j,r}^{\top},
      \end{aligned}
    \end{equation}  
    and $\mathbf{d}_{n}^{\top} = (\mathbf{d}_{n,1,r}^{\top},..., \mathbf{d}_{n,j,l}^{\top}, \mathbf{d}_{n,j,r}^{\top}, ..., \mathbf{d}_{n,N,r}^{\top})^{\top}$ with
    \begin{equation}
      \label{dRobin}
      \begin{split}
        \mathbf{d}_{n,j,l} & = 2ip \cdot {Q}_{j-1,r} (\mathbb{A}_{j-1,n} +ip \cdot \mathbb{M}^{\Gamma_{j-1}}) \frac{2i}{\Delta t} \mathbf{u}_{j-1,n}, \ j=2,3,...,N, \\
        \mathbf{d}_{n,j,r} & = 2ip \cdot {Q}_{j+1,l} (\mathbb{A}_{j+1,n} +ip \cdot \mathbb{M}^{\Gamma_{j+1}})^{-1} \frac{2i}{\Delta t} \mathbf{u}_{j+1,n}, \ j=1,2,...,N-1.
      \end{split}
    \end{equation}
    Secondly, since $V=0$, then 
    \begin{equation}
      \label{Aeq}
      \mathbb{M}_{j,W_n}=0, \quad \mathbb{A}_{j,1}=\mathbb{A}_{j,2}=...=\mathbb{A}_{j,N_T} = \frac{2i}{\Delta t} \mathbb{M}_{j}-\mathbb{S}_{j}, j=1,2,...,N.
    \end{equation}
    Thus, the blocks $X^{j,1}$, $X^{j,2}$, $X^{j,3}$ and $X^{j,4}$ are both independent of time step $n$.

    Finally, thanks to the hypothesis of the proposition, the geometry of each subdomain is identical. Thus, the various matrices coming from the assembly of the finite element methods are the same. Therefore, we have
    \begin{gather}
      \mathbb{M}_1=\mathbb{M}_2=...=\mathbb{M}_N, \quad \nonumber
      \mathbb{S}_1=\mathbb{S}_2=...=\mathbb{S}_N, \quad
      \mathbb{M}^{\Gamma_{1}}=\mathbb{M}^{\Gamma_{2}}=...=\mathbb{M}^{\Gamma_{N}}, \\
      Q_{1,l}=Q_{2,l}=...=Q_{N,l}, \quad
      Q_{1,r}=Q_{2,r}=...=Q_{N,r}. \label{Meq}
    \end{gather} 
    The conclusion follows directly from \eqref{matXRobin}.
  \end{proof}

  \begin{proposition}
    \label{props2p}
    Let us consider the transmission condition $S_{\mathrm{pade}}^m$. If the potential is zero, then the matrix $\mathcal{L}_h$ takes the form \eqref{Lmpi} and $\mathcal{L}_h$ is independent of time step $n$. In addition, if the subdomains are equal, then \eqref{blockeq} is true.
  \end{proposition}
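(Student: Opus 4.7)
The plan is to follow the same three-step template used for Proposition \ref{proprobin}, replacing the local discrete system \eqref{distrobin} with its $S_{\mathrm{pade}}^m$ counterpart \eqref{dists2p}. First I would derive explicit expressions for the four blocks $X^{j,1}, X^{j,2}, X^{j,3}, X^{j,4}$ and for the vector $\mathbf{d}_n$ by inserting the formula for $\mathbf{v}_{j,n}^k$ obtained from \eqref{dists2p} into the update relation \eqref{AlgoGraph}. The key ingredient is that, after solving \eqref{dists2p}, the local solution depends linearly on the interface data $(\mathbf{l}_{j,n}^k,\mathbf{r}_{j,n}^k)$ and on $\mathbf{u}_{j,n-1}$, so applying the restriction $Q_{j,l}$ or $Q_{j,r}$ followed by the discrete operator $\mathbf{S}$ (given by \eqref{TCSdist} with the Padé formula) and subtracting the incoming flux produces a block-affine map. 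Concretely, writing $\mathbb{T}_{j,n} = \mathbb{A}_{j,n} + i\bigl(\sum_{s=1}^m a_s^m\bigr)\mathbb{M}^{\Gamma_j} - \sum_{s=1}^m \mathbb{B}_{j,s}\mathbb{D}_{j,n,s}^{-1}\mathbb{C}_j$ for the local operator in \eqref{dists2p}, the blocks read
\begin{equation*}
X^{j,1} = -I - 2\,Q_{j,l}\,\mathbf{S}\,\mathbb{T}_{j,n}^{-1}\mathbb{M}^{\Gamma_j}Q_{j,l}^\top,\quad X^{j,2} = -2\,Q_{j,l}\,\mathbf{S}\,\mathbb{T}_{j,n}^{-1}\mathbb{M}^{\Gamma_j}Q_{j,r}^\top,
\end{equation*}
and symmetrically for $X^{j,3}$, $X^{j,4}$, with the auxiliary $\bm{\varphi}_s$-contributions absorbed through the Schur complement $\sum_s\mathbb{B}_{j,s}\mathbb{D}_{j,n,s}^{-1}\mathbb{C}_j$. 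The resulting sparsity pattern is exactly that of \eqref{Lmpi}, since only the two adjacent interfaces of $\Omega_j$ are touched by the update.

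Next I would establish independence of $n$. When $V=0$ we have $W_n=0$ for every $n$, hence both $\mathbb{M}_{j,W_n}=0$ and $\mathbb{M}^{\Gamma_j}_{W_n}=0$. This kills the $n$-dependence in both $\mathbb{A}_{j,n}$ and $\mathbb{D}_{j,n,s}$ (the $d_s^m\mathbb{M}^{\Gamma_j}$ term and the $2i/\Delta t$ term remain, but neither depends on $n$), and the blocks $\mathbb{B}_{j,s}$, $\mathbb{C}_j$ never involved $n$ to begin with. Therefore $\mathbb{T}_{j,n}$ reduces to an $n$-independent matrix $\mathbb{T}_j$, and each $X^{j,i}$ inherits the same independence from \eqref{matXRobin}'s analogue above.

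Finally, for the equal-subdomain case I would invoke the same argument as in the Robin proof: identical geometry implies the equalities \eqref{Meq} for $\mathbb{M}_j$, $\mathbb{S}_j$, $\mathbb{M}^{\Gamma_j}$, $Q_{j,l}$, $Q_{j,r}$, and in addition the boundary stiffness matrices $\mathbb{S}^{\Gamma_j}$ are equal, so the Padé blocks $\mathbb{B}_{j,s}$, $\mathbb{C}_j$, $\mathbb{D}_{j,s}$ are all independent of $j$. Substituting these equalities into the formulas for $X^{j,i}$ immediately gives \eqref{blockeq}.

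The main obstacle is essentially notational rather than conceptual: one has to carry the Schur-complement term $\sum_s\mathbb{B}_{j,s}\mathbb{D}_{j,n,s}^{-1}\mathbb{C}_j$ through the computation cleanly, and verify that the action of $\mathbf{S}$ on $Q_{j,l}\mathbf{v}$, $Q_{j,r}\mathbf{v}$ in \eqref{AlgoGraph} indeed reproduces the Padé transmission formula \eqref{TCSdist} with the auxiliary unknowns $\bm{\varphi}_s$ correctly eliminated. Once this elimination is in place, the $n$-independence and the symmetry for equal subdomains follow by exactly the same bookkeeping as in Proposition \ref{proprobin}.
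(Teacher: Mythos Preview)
Your proposal is correct and follows essentially the same approach as the paper: derive explicit block formulas for $X^{j,i}$ by inserting the solution of \eqref{dists2p} into \eqref{AlgoGraph}, then use $V=0$ to kill the $W_n$-dependence in both $\mathbb{A}_{j,n}$ and $\mathbb{D}_{j,n,s}$, and finally invoke equal geometry to identify the blocks across subdomains. The paper writes out the Pad\'e transmission operator explicitly as $i\sum_{s=0}^m a_s^m + i\sum_{s=1}^m a_s^m d_s^m\,\mathbb{D}_{j,n,s}^{-1}\mathbb{C}_j$ in the formulas for $X^{j,i}$ rather than hiding it behind a symbol $\mathbf{S}$, which is exactly the Schur-complement elimination you flag as the one bookkeeping step to carry out.
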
                            
  \begin{proof}
    The proof is almost same as that of the proposition \ref{proprobin}. Using \eqref{dists2p} and \eqref{AlgoGraph} gives
    \begin{equation}
      \label{matXSP}
      \begin{aligned}
        & X^{j,1} = -I - 2{Q}_{j,l} \Big( i\sum_{s=0}^m a_s^m + i\sum_{s=1}^m a_s^m d_s^m \mathbb{D}_{j,n,s}^{-1} \mathbb{C}_j \Big) \Big( \mathbb{A}_{j,n} + i(\sum_{s=1}^m a_s^m)  \cdot \mathbb{M}^{\Gamma_j}
        - \sum_{s=1} \mathbb{B}_{j,s} \mathbb{D}_{j,n,s}^{-1} \mathbb{C}_{j} \Big)^{-1} \mathbb{M}^{\Gamma_{j}} {Q}_{j,l}^{\top}, \\
        & X^{j,2} = - 2{Q}_{j,l} \Big( i\sum_{s=0}^m a_s^m + i\sum_{s=1}^m a_s^m d_s^m \mathbb{D}_{j,n,s}^{-1} \mathbb{C}_j \Big) \Big( \mathbb{A}_{j,n} + i(\sum_{s=1}^m a_s^m)  \cdot \mathbb{M}^{\Gamma_j}
        - \sum_{s=1} \mathbb{B}_{j,s} \mathbb{D}_{j,n,s}^{-1} \mathbb{C}_{j} \Big)^{-1} \mathbb{M}^{\Gamma_{j}} {Q}_{j,r}^{\top}, \\
        & X^{j,3} = - 2{Q}_{j,r} \Big( i\sum_{s=0}^m a_s^m + i\sum_{s=1}^m a_s^m d_s^m \mathbb{D}_{j,n,s}^{-1} \mathbb{C}_j \Big) \Big( \mathbb{A}_{j,n} + i(\sum_{s=1}^m a_s^m)  \cdot \mathbb{M}^{\Gamma_j}
        - \sum_{s=1} \mathbb{B}_{j,s} \mathbb{D}_{j,n,s}^{-1} \mathbb{C}_{j} \Big)^{-1} \mathbb{M}^{\Gamma_{j}} {Q}_{j,l}^{\top}, \\
        & X^{j,4} = -I - 2{Q}_{j,r} \Big( i\sum_{s=0}^m a_s^m + i\sum_{s=1}^m a_s^m d_s^m \mathbb{D}_{j,n,s}^{-1} \mathbb{C}_j \Big) \Big( \mathbb{A}_{j,n} + i(\sum_{s=1}^m a_s^m)  \cdot \mathbb{M}^{\Gamma_j}
        - \sum_{s=1} \mathbb{B}_{j,s} \mathbb{D}_{j,n,s}^{-1} \mathbb{C}_{j} \Big)^{-1} \mathbb{M}^{\Gamma_{j}} {Q}_{j,r}^{\top}.
      \end{aligned}
    \end{equation}
    Under these assumptions, we have \eqref{Aeq} and \eqref{Meq}. In addition, the matrix associated with the transmission condition $S_{\mathrm{pade}}^m$ satisfy
    \begin{displaymath}
      \mathbb{B}_{1,s}=\mathbb{B}_{2,s}=...=\mathbb{B}_{N,s}, \quad
      \mathbb{C}_{1}=\mathbb{C}_{2}=...=\mathbb{C}_{N}, \quad
      \mathbb{D}_{1,n,s}=\mathbb{D}_{2,n,s}=...=\mathbb{D}_{N,n,s}, \ s=1,2,...m,
    \end{displaymath}
    and $\mathbb{D}_{j,n,s} = Q (\frac{2i}{\Delta t} \mathbb{M}^{\Gamma_j} -  \mathbb{S}^{\Gamma_j} + \mathbb{M}^{\Gamma_j}_{W_n} + d_s^m \mathbb{M}^{\Gamma_j}) Q^{\top}$ is independent of time step $n$ since  $\mathbb{M}^{\Gamma_j}_{W_n}=0$.
  \end{proof}

  Intuitively, the free semi-discrete Schr{\"o}dinger operator without
  potential is a rough approximation of the semi-discrete
  Schr{\"o}dinger operator with potential: 
  \begin{displaymath}
    \frac{2i}{\Delta t} u + \Delta u \approx \frac{2i}{\Delta t} u + \Delta u + V u + f(u)u.
  \end{displaymath} 
  In other words, $V u + f(u)u$ is a perturbation of the free
  semi-discrete Schr\"odinger operator. Thus, the matrix
  $\mathcal{L}_{h}$ can be seen as an approximation to
  \begin{displaymath}
    I-\mathcal{R}_{h,n}.
  \end{displaymath}
  Based on the previous propositions, it is sufficient to compute only four
  blocks $X^{2,1}$, $X^{2,2}$, $X^{2,3}$ and $X^{2,4}$ to construct the
  preconditioner $P$. It will be shown in the following section that the
  construction can be implemented in a parallel way.

  \section{Parallel implementation}
  \label{Sec_Imp}

  We present the parallel implementation of the classical
  algorithm \eqref{algo_d} and the preconditioned algorithm
  \eqref{algo_pd} in this section. We fix one MPI process per
  subdomain \cite{mpiforum30}. We use the distributed matrix,
  vector and iterative linear system solver avalaible in PETSc library
  \cite{petsc-user-ref}. 

  \subsection{Classical algorithm}

  The discrete interface vector $\mathbf{g}_{n}^k$ is stored in
  a distributed manner in PETSc form. As shown by
  \eqref{gn_petsc}, $\mathbf{r}_{1,n}^k$ is located in MPI process 0, $\mathbf{l}_{j,n}^k$ and $\mathbf{r}_{j,n}^k$ are in MPI process $j-1$, $j=2,3,...,N-1$ and $\mathbf{r}_{N,n}^k$ is in MPI process $N-1$.
  \begin{equation} 
    \label{gn_petsc}
    \mathbf{g}_{n}^k = 
    \begin{pmatrix}
      \mathbf{r}_{1,n}^k \\
      \vdots \\
      \mathbf{l}_{j,n}^k \\
      \mathbf{r}_{j,n}^k \\
      \vdots \\
      \mathbf{r}_{N,n}^k
    \end{pmatrix}
    \renewcommand{\arraystretch}{1.15}
    \begin{array}{c} 
      \rdelim \}{1}{4pt}[\small MPI 0] \\  
      \\
      \rdelim \}{2}{4pt}[\small MPI $j-1$] \\
      \\
      \\
      \rdelim \}{1}{4pt}[\small MPI $N-1$] \\
    \end{array}
  \end{equation} 
  As shown by \eqref{AlgoGraph} for $N=3$, at iteration $k$, $\mathbf{v}_{j,n}^k,j=1,2,...,N$ is computed on each subdomain locally and the boundary values are communicated.

  \subsection{Preconditioned algorithm}
  Thanks to the analysis yielded in previous section, we can build
  explicitly $\mathcal{L}_h$ with few computations. For the Robin
  transmission condition, it is is based on the formulas
  \eqref{matXRobin}. For the transmission condition
  $S_{\mathrm{pade}}^m$, the idea is equivalent, but involves
  \eqref{matXSP}. 
  According to the proposition \eqref{proprobin}, the column $s$ of $X^{2,1}$ and $X^{2,3}$ are 
  \begin{displaymath}
    \begin{split}
      X^{2,1} \mathbf{e}_s & = -\mathbf{e}_s - 2ip \cdot Q_{2,l} (\mathbb{A}_{j,n} + ip \cdot \mathbb{M}^{\Gamma_j})^{-1} \mathbb{M}^{\Gamma_{2}} Q_{2,l}^{\top} \mathbf{e}_s, \\
      X^{2,3} \mathbf{e}_s & = - 2ip \cdot Q_{2,r}  (\mathbb{A}_{j,n} + ip \cdot \mathbb{M}^{\Gamma_j})^{-1} \mathbb{M}^{\Gamma_{2}} Q_{2,l}^{\top} \mathbf{e}_s,
    \end{split}
  \end{displaymath}                               
  where
  $\mathbf{e}_s =(0,0,...,1,...0) \in
  \mathbb{C}^{N_T \times N_y}$, all its elements
  are zero except the $s$-th, which is one. The
  element $\mathbb{M}^{\Gamma_{2}}
  Q_{2,l}^{\top} \mathbf{e}_s$ being a vector,
  it is necessary to compute one time the
  application of $ (\mathbb{A}_{j,n} + ip \cdot
  \mathbb{M}^{\Gamma_j})^{-1}$ to
  vector. Similarly, we have 
  \begin{displaymath}
    \begin{split}
      X^{2,2} \mathbf{e}_s & = - 2ip \cdot Q_{2,l}  (\mathbb{A}_{j,n} + ip \cdot \mathbb{M}^{\Gamma_j})^{-1} \mathbb{M}^{\Gamma_{2}} Q_{2,r}^{\top} \mathbf{e}_s, \\
      X^{2,4} \mathbf{e}_s & = -\mathbf{e}_s - 2ip \cdot Q_{2,r}  (\mathbb{A}_{j,n} + ip \cdot \mathbb{M}^{\Gamma_j})^{-1}
      \mathbb{M}^{\Gamma_{2}} Q_{2,r}^{\top} \mathbf{e}_s,
    \end{split}
  \end{displaymath} 
  Let us recall that $\mathbb{A}_{j,n}=\frac{2i}{\Delta
    t}\mathbb{M}_{j}-\mathbb{S}_{j}$ for $V=0$, $f=0$. To know
  the first $N_y$ columns of $X^{2,1}$, $X^{2,2}$, $X^{2,3}$ and
  $X^{2,4}$, we only have to compute $2N_y$ times the application of $
  (\mathbb{A}_{j,n} + ip \cdot \mathbb{M}^{\Gamma_j})^{-1}$ to
  vector. In other words, this amounts to solve the Schr\"odinger
  equation on a single subdomain $2N_y$ times to build the matrix
  $\mathcal{L}_h$. The resolutions are all independent. Therefore, we
  can solve them on different processors using 
  MPI paradigm. We fix one MPI process per domain. To construct the
  matrix $\mathcal{L}_h$, we use the $N$ MPI processes to solve the
  equation on a single subdomain (ex. $(0,T) \times \Omega_2$) $2N_y$
  times. Each MPI process therefore solves the Schr\"{o}dinger equation
  on a single subdomain maximum 
  \begin{displaymath}
    N_{\mathrm{mpi}} := [\frac{2N_y}{N}] + 1 \ \text{times,}
  \end{displaymath}
  where $[x]$ is the integer part of $x$. This construction is therefore super-scalable in theory. Indeed, if $N$ is doubled, then the size of subdomain is divided by two and $N_{\mathrm {mpi}} $ is also approximately halved.                   

  Concerning the computational phase, the transpose of $\mathcal{L}_{h}$ is stored in a distributed manner using the library PETSc. As shown by \eqref{Lmpi}, the first block column of $\mathcal{L}_{h}$ lies in MPI process 0. The second and third blocks columns are in MPI process 1, and so on for other processes. In addition, for any vector $y$, the vector $x: = P^{-1} y$ is computed by solving the linear system
  \begin{displaymath}
    P x = y
  \end{displaymath}  
  with Krylov methods (GMRES or BiCGStab).



  \section{Numerical results}
  \label{Sec_Num}

  We implement the algorithms in a cluster consisting of 92 nodes (16 cores/node, Intel Sandy Bridge E5-2670, 32GB/node). We fix one MPI process per subdomain and 16 MPI processes per node. The communications are handled by PETSc and Intel MPI. The linear systems related to \eqref{NproblemRobinNL}, \eqref{NproblemSPNL}, \eqref{NproblemRobinL} and \eqref{NproblemSPL},  are solved with the LU direct method using MKL Pardiso library. The convergence condition for our algorithms is $\parallel \mathbf{g}_{n}^{k+1} - \mathbf{g}_{n}^k \parallel <10^{-10},n=1,2,...,N_T$. The initial vectors are
  \begin{itemize}
  \item $\mathbf{g}_{1}^0=\mathbf{0}$ or $\mathbf{g}_{1}^0=$ random vector,
  \item $\displaystyle \mathbf{g}_{n}^0= \lim_{k\rightarrow \infty} \mathbf{g}_{n-1}^{k}, n=2,3,...,N_T$.
  \end{itemize}
  Since the convergence properties for different time steps $n=1,2,...,N_T$ are similar, we only consider the number of iterations required for convergence of the first time step $n=1$.  As mentioned in \cite{Gander2008history}, using the zero initial vector could give wrong conclusions associated with the convergence. Thus, the zero vector is used when one wants to evaluate the computation time, while the random vector is used when comparing the transmission conditions. The theoretical optimal parameter $p$ (resp. $m$) in the transmission condition Robin (resp. $S_{\mathrm{pade}}^m$) is not at hand for us, we then seek the best parameter numerically for each case.

  This section is composed of two subsections. The first one is devoted to the Schr\"odinger equation. In the second, we consider the simulation of Bose-Einstein condensates.

  \subsection{Schr\"{o}dinger equation}
  \label{Sec_Num_Sch}

  We decompose the physical domain $(-16,16) \times (-8,8)$ into $N$ equal subdomains without overlap. The final time $T$ and the time step $\Delta t$ are fixed to be $T=0.5$ and $\Delta t = 0.01$ in this subsection. We consider two different meshes
  \begin{gather*}
    \Delta x=1/128, \ \Delta y=1/8,\\
    \Delta x=1/2048, \ \Delta y=1/64,
  \end{gather*}
  where the size of cell is $\Delta x \times \Delta y$. The potential and the initial datum (Figure \ref{initsol}) are
  \begin{displaymath}
    \mathscr{V}=|u|^2, \quad u_0(x,y) = e^{-x^2-y^2 -0.5i x},
  \end{displaymath} 
  which give rise to a solution that propagates slowly to the negative side in $y$ direction and undergoes dispersion. It is possible to solve numerically the Schr\"{o}dinger equation on the entire domain $\Omega$ with the first mesh ($\Delta x=1/128, \Delta y=1/8$) under the memory limitation (32G). We compare in this sub section the classical and the preconditioned algorithms, as well as the two transmission conditions.
  \begin{figure}[!htbp]
    \centering
    \includegraphics[width=0.5\textwidth]{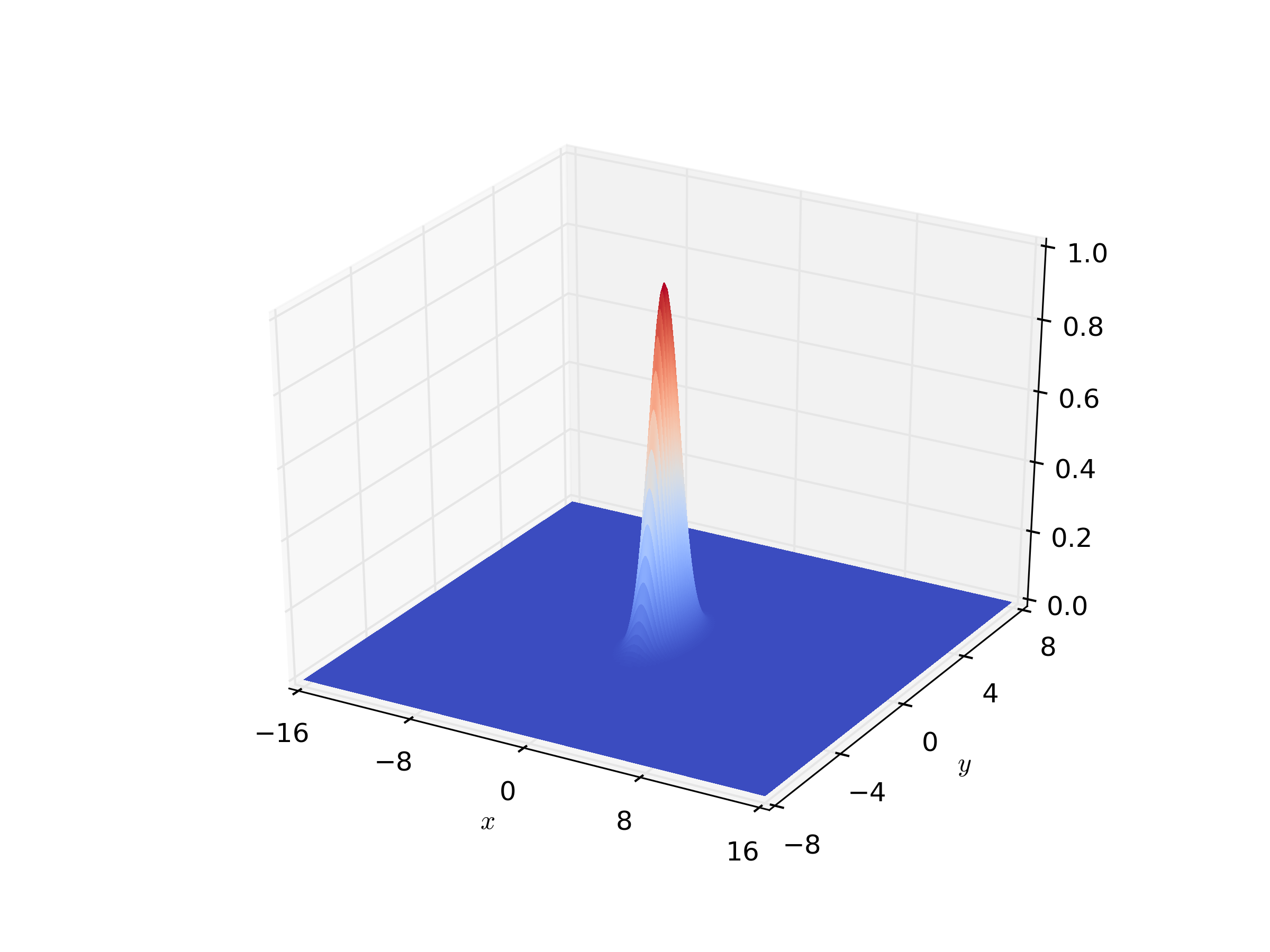}
    \caption{Initial datum $|u_0|$.}
    \label{initsol}
  \end{figure}

  \subsubsection{Comparison of the classical algorithm and the preconditioned algorithm}

  We are interested in observing the robustness, the number of iterations of the first time step, the computation time involving the transmission condition $S_{\mathrm{pade}}^m$. The zero vector is used as the initial vector $\mathbf{g}_{1}^0$. We denote by $N_{\mathrm{nopc}}$ (resp. $N_{\mathrm{pc}}$) the number of iterations required for convergence with the classical algorithm (resp. the preconditioned algorithm). $T_{\mathrm{nopc}}$ and  $T_{\mathrm{pc}}$ denote the computation times of the classical algorithm and the preconditioned algorithm respectively. In addition, we denote by $T^{\mathrm{ref}}$ the computation time to solve numerically on a single processor the Schr{\"o}dinger equation on the entire domain. 

  First, we consider a mesh with $\Delta x=1/128$, $\Delta
  y=1/8$. We make the tests for $N=2,4,8,16,32$ subdomains. The
  convergence history of the first time step is presented in
  Figure \ref{hist_2_32_NL_abc2p} for $N=2$ (left) and $N=32$
  (right). Table \ref{Niter_Time_2_32_NL_abc2p} shows the number
  of iterations of the first time step and the computation
  times. We can see that all the algorithms are robust and
  scalable. The number of iterations is independent of number of
  subdomains. This independence has already been observed for one
  dimensional Schr{\"o}dinger equation for small $N$
  \cite{Halpern2010_sch, XF20151d}. In addition, the
  preconditioner allow to reduce number of iterations and
  computation time. 

  \begin{figure}[!htbp]
    \centering
    \includegraphics[width=0.4\textwidth]{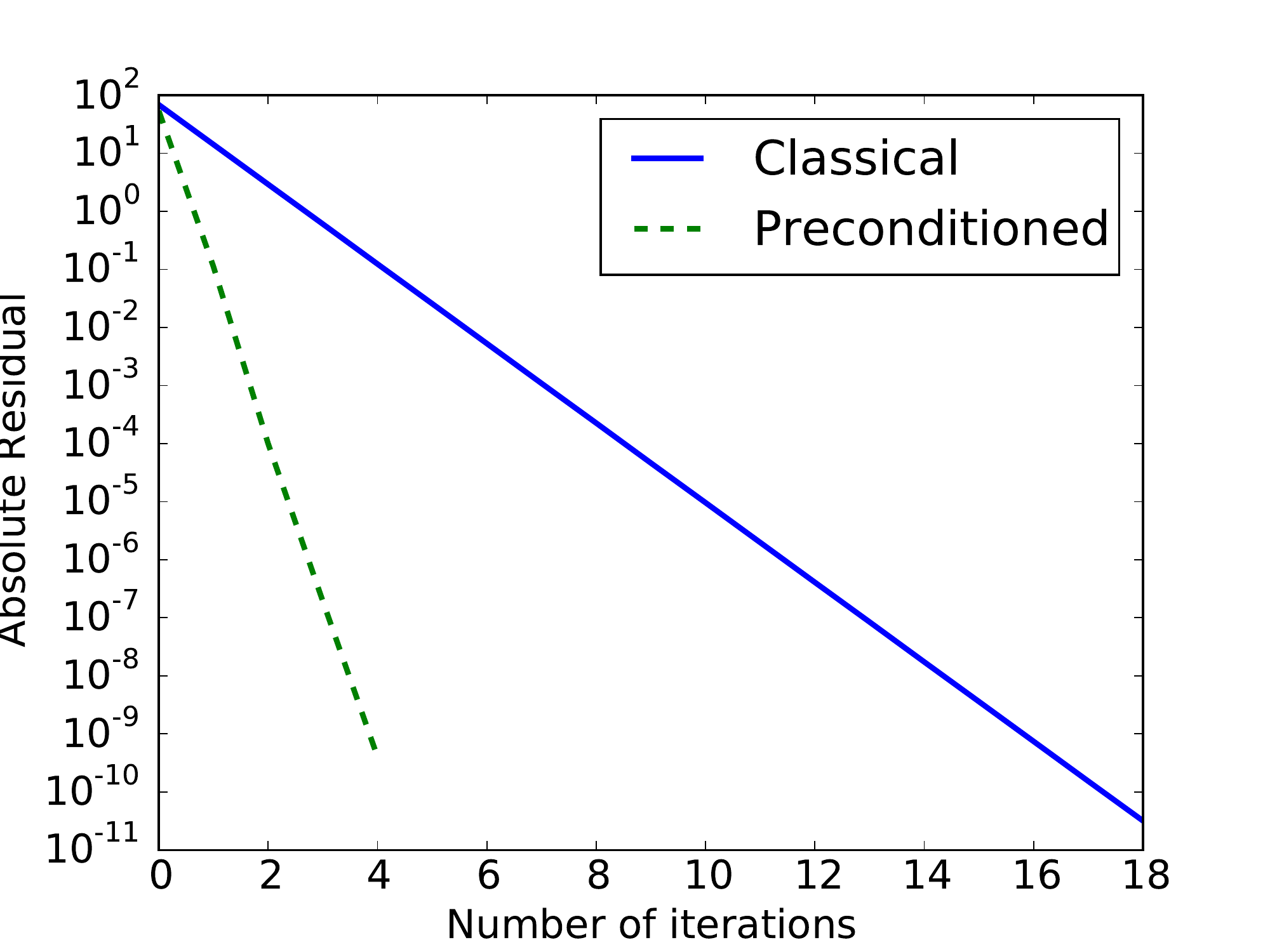}
    \includegraphics[width=0.4\textwidth]{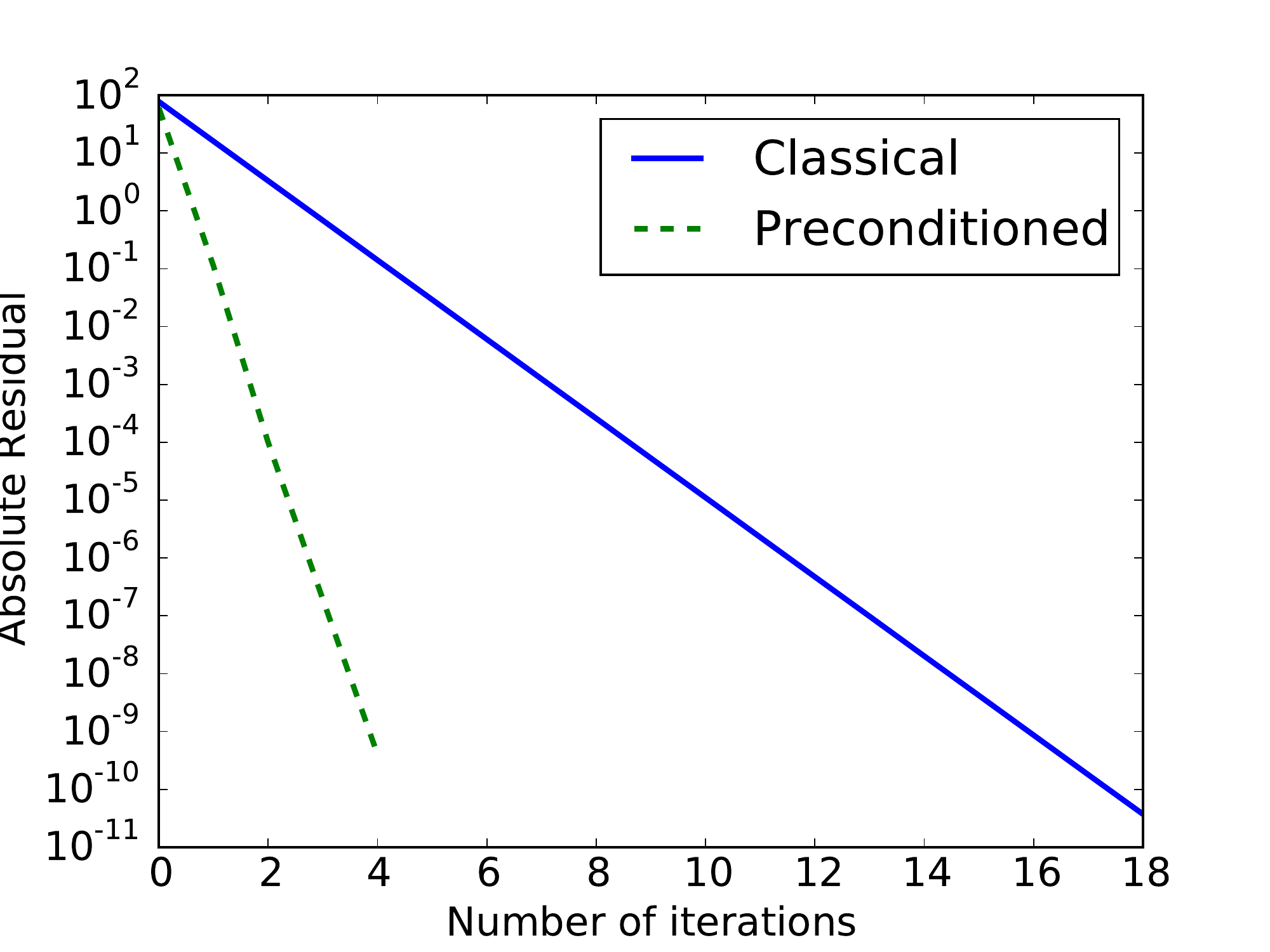}
    \caption{\small Convergence histories of the first time step for $N=2$ (left) and $N=32$ (right). The mesh is $\Delta x = 1/128$, $\Delta y = 1/8$.}
    \label{hist_2_32_NL_abc2p}
  \end{figure}

  \begin{table}[!htbp] 
    \renewcommand{\arraystretch}{1.2}
    \centering
    \begin{tabular}{|c|c|c|c|c|c|}
      \hline
      $N$ & 2 & 4 & 8 & 16 & 32 \\
      \hline
      $N_{\mathrm{nopc}}$ & 18 & 18 & 18 & 18 & 18 \\ 
      \hline
      $N_{\mathrm{pc}}$ & 6 & 6 & 6 & 6 & 6 \\
      \hline
      $T^{\mathrm{ref}}$ & \multicolumn{5}{c|}{93.7}\\
      \hline
      $T_{\mathrm{nopc}}$ & 1106.2 & 571.2 & 297.4 & 161.8 & 85.2 \\ 
      \hline
      $T_{\mathrm{pc}}$ & 356.2 & 180.5 & 92.1 & 50.3 & 26.6 \\ 
      \hline
    \end{tabular}
    \caption{\small Number of iterations and total computation
      time (seconds) of the algorithms with the mesh $\Delta x =
      1/128$, $\Delta y = 1/8$.}
    \label{Niter_Time_2_32_NL_abc2p}
  \end{table}

  Secondly, we reproduce the same tests with the mesh $\Delta x
  = 1/2048$, $\Delta y = 1/64$. The convergence history of the
  first time step, the total computation times are shown in
  Figure \ref{hist_N_256_1024_NL_abc2p} and Table
  \ref{Niter_Time_256_1024_NL_abc2p}. The algorithms are both
  robust for $N=1024$, but not scalable from $N=512$ to
  $N=1024$. The classical algorithm loses scalability since if
  we use more
  subdomains used to decompose $\Omega$, then more iterations are
  required for convergence. {Concerning the preconditioned
    algorithm, the computational time is larger with $N=1024$
    compared to $N=512$ since the application of the preconditioner increases with larger $N$.} However, the preconditioned algorithm is
  much more efficient since it can both
  reduce  the 
  number of iterations and the total computation times. 

  \begin{figure}[!htbp]
    \centering
    \includegraphics[width=0.4\textwidth]{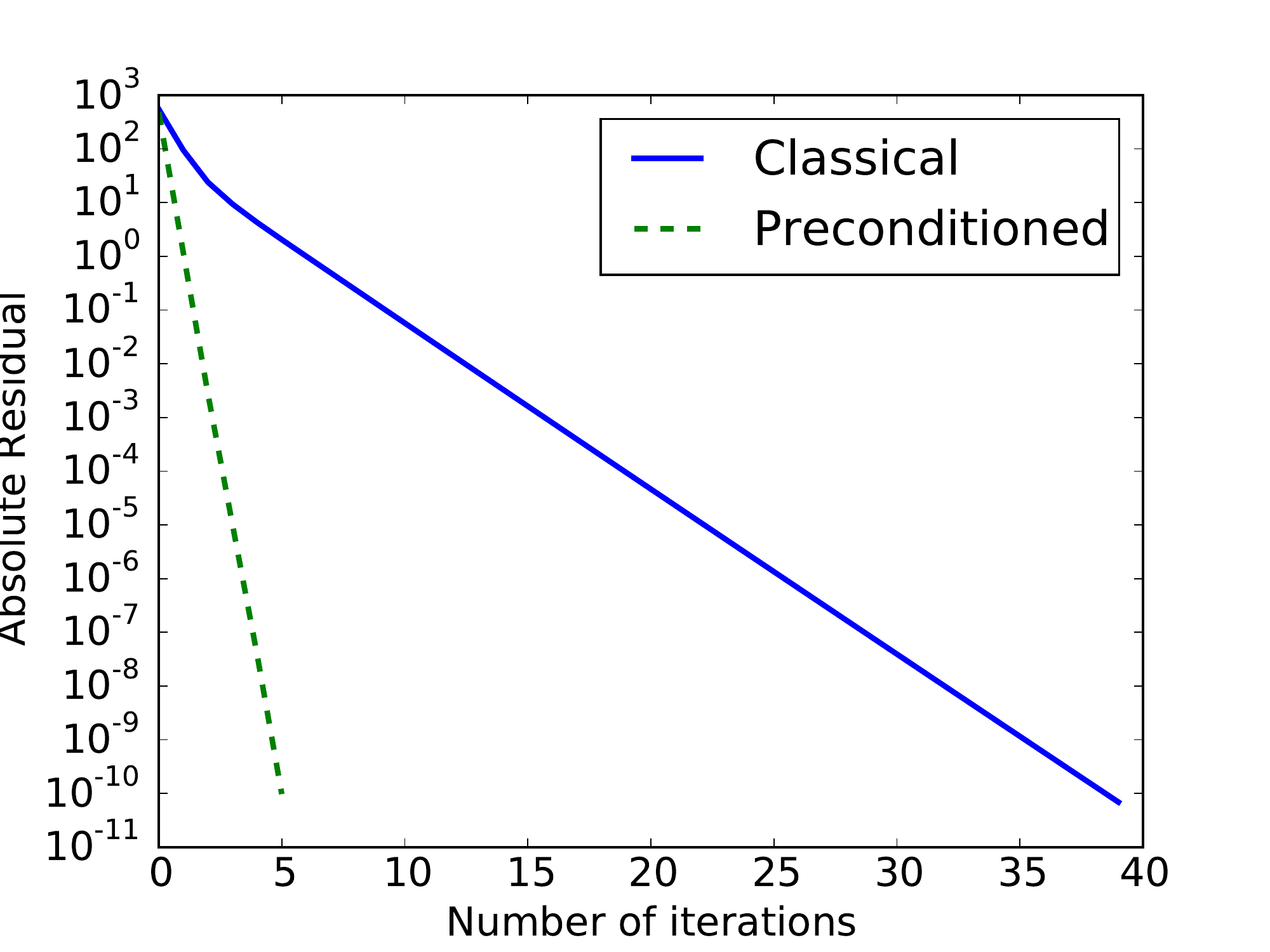}
    \includegraphics[width=0.4\textwidth]{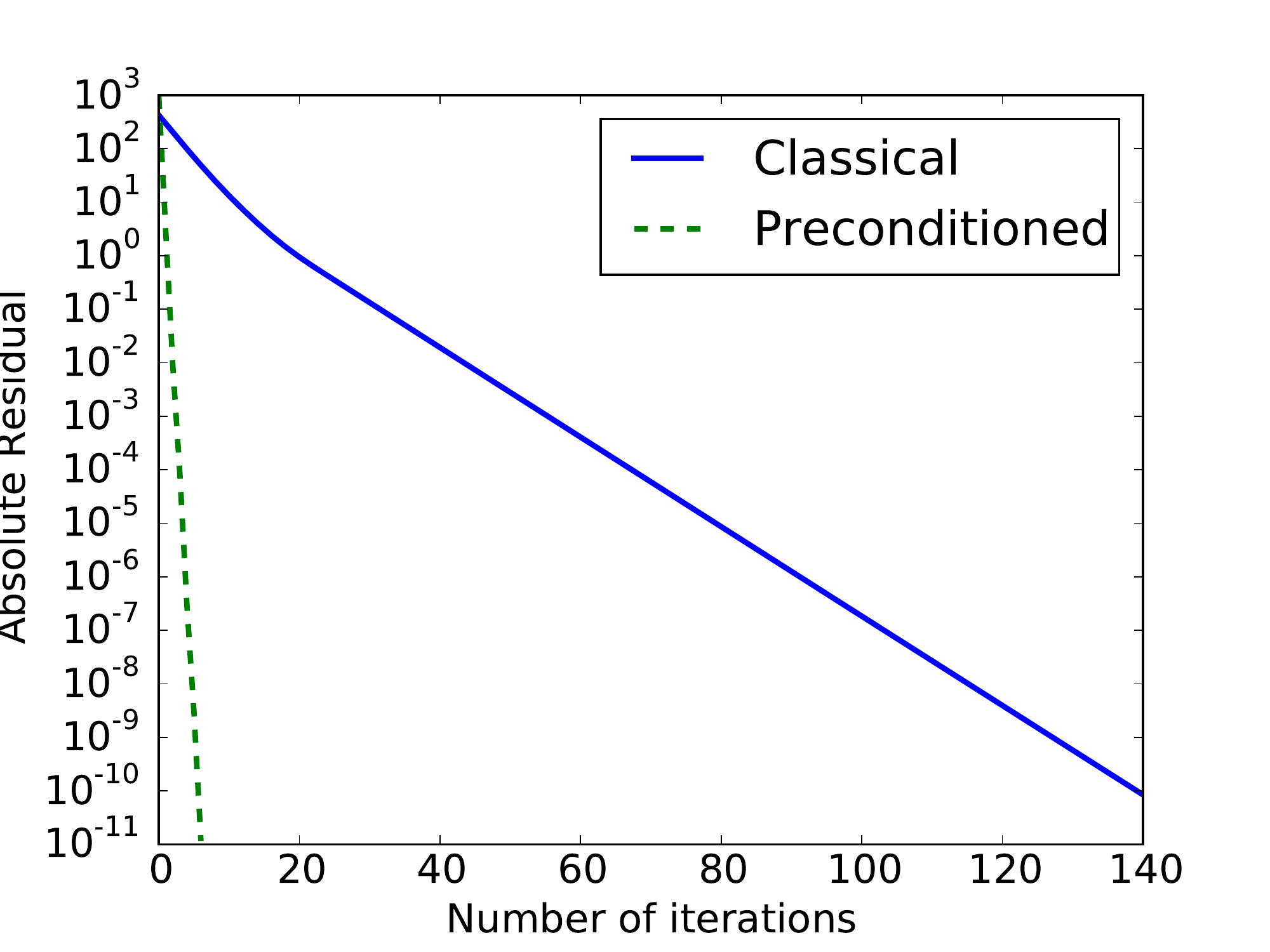}
    \caption{\small Convergence histories of the first time step for $N=256$ (left) and $N=1024$ (right) with the mesh $\Delta x = 1/2048$, $\Delta y = 1/64$.}
    \label{hist_N_256_1024_NL_abc2p}
  \end{figure}

  \begin{table}[!htbp] 
    \renewcommand{\arraystretch}{1.2}
    \centering
    \begin{tabular}{|c|c|c|c|c|}
      \hline
      $N$ & 256 & 512 & 1024 \\
      \hline
      Classical algorithm & 3582.4 & 2681.5 & 2516.6 \\
      \hline
      Preconditioned algorithm & 596.9 & 376.1 & 441.9\\
      \hline
    \end{tabular}
    \caption{\small Computation times (seconds) of the algorithms with the mesh $ \Delta x = 1/2048, \Delta y = 1/64$.}
    \label{Niter_Time_256_1024_NL_abc2p}
  \end{table}


  \subsubsection{Comparison of transmission conditions}
  
  In this part, we compare numerically the transmission
  conditions Robin and $S_{\mathrm{pade}}^m$ in the framework of
  the two algorithms. The initial vector $\mathbf{g}_{1}^0$ here
  is a random vector to make sure that all the frequencies are
  included. The time step is fixed to be $\Delta t=0.01$ and the
  mesh is $\Delta x = 1/128$, $\Delta y = 1/8$. Figure
  \ref{compa_hist_NL_N2} and Figure \ref{compa_hist_NL_N32}
  present the convergence histories of the first time step in
  the framework of the classical and the preconditioned
  algorithms with Robin and $S_{\mathrm{pade}}^m$ transmission
  conditions for $N=2$ and $N=32$ respectively. It can be seen
  that in the framework of the classical algorithm, the
  transmission condition $S_{\mathrm{pade}}^m$ allows the
  algorithm to  converge faster, while in the framework of
  preconditioned algorithm, they have similar histories of
  convergence. This observation indicates that the
  preconditioner $P$ is a good approximation of the nonlinear
  operator $I - \mathcal{R}_{h,n}$. The influence of the
  transmission conditions is eliminated by the
  preconditioner. In addition, we could confirm the conclusion of
  the previous subsection: the preconditioner reduces a lot the
  number of iterations required for convergence.  
  \begin{figure}[!htbp]
    \centering
    \includegraphics[width=0.4\textwidth]{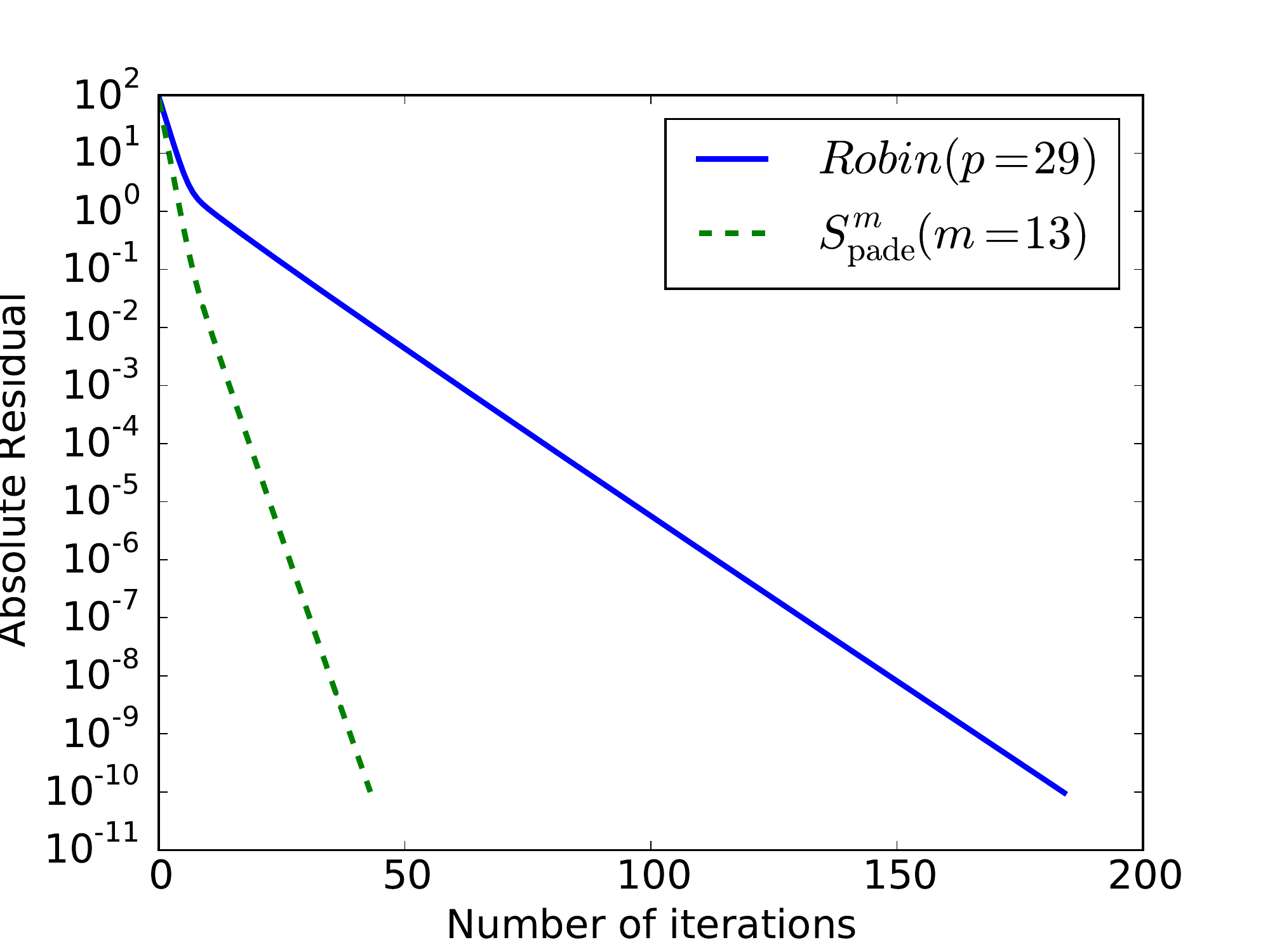}
    \includegraphics[width=0.4\textwidth]{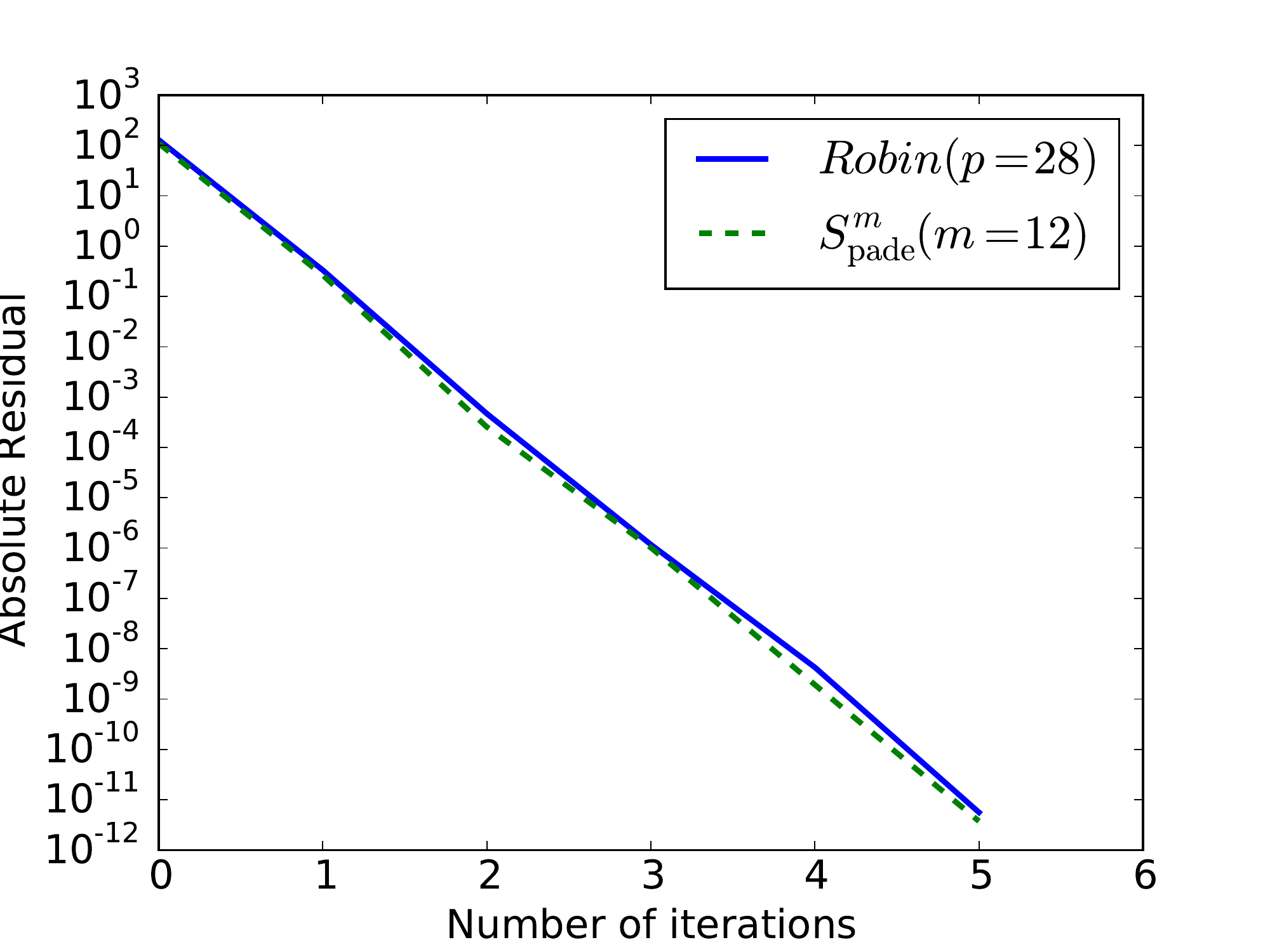}
    \caption{\small Convergence histories of the first time step of the classical algorithm (left) and the preconditioned algorithm for $N=2$. The mesh is $\Delta x = 1/128$, $\Delta y = 1/8$.}
    \label{compa_hist_NL_N2}
  \end{figure}
  \begin{figure}[!htbp]
    \centering
    \includegraphics[width=0.4\textwidth]{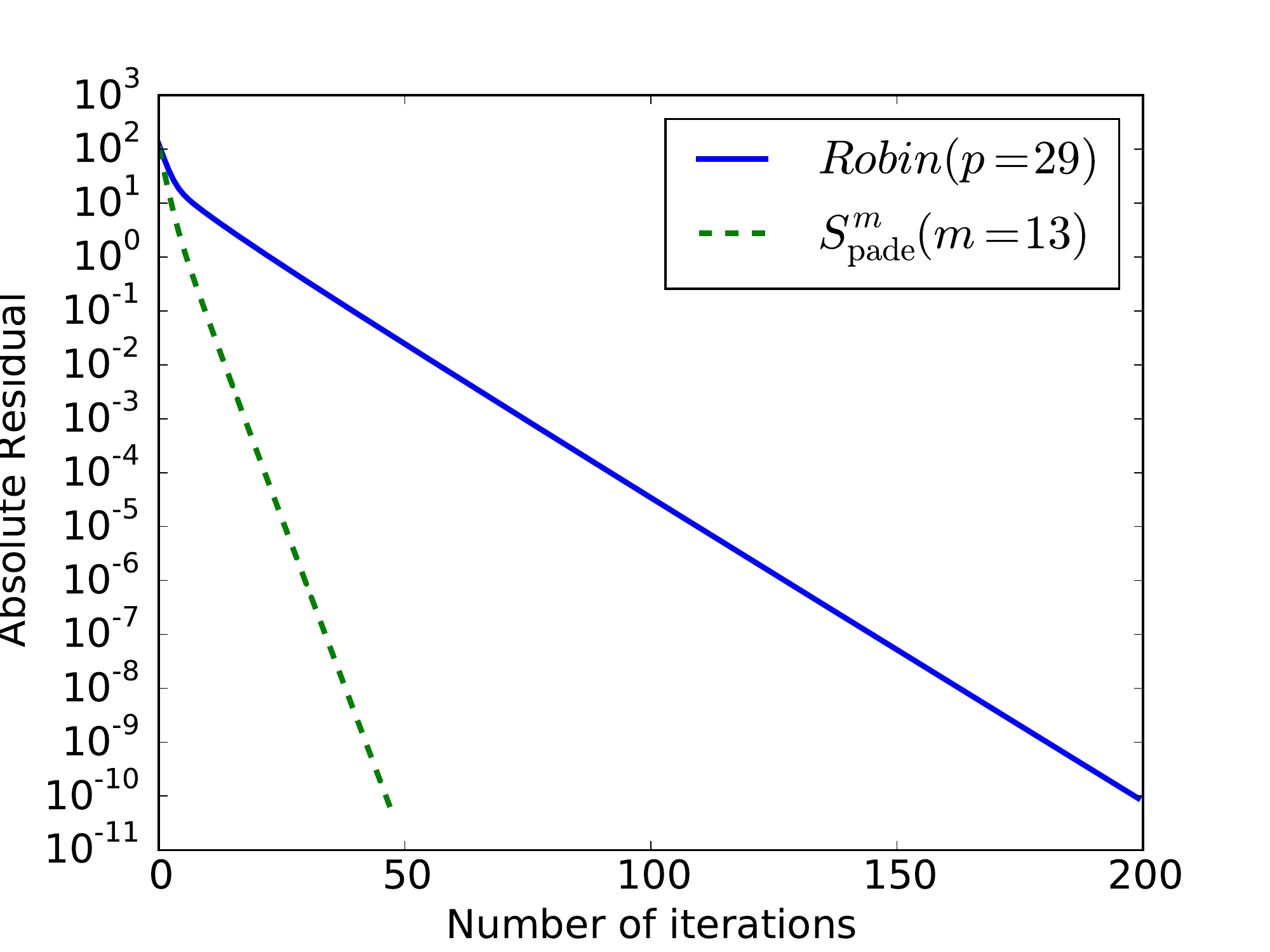}
    \includegraphics[width=0.4\textwidth]{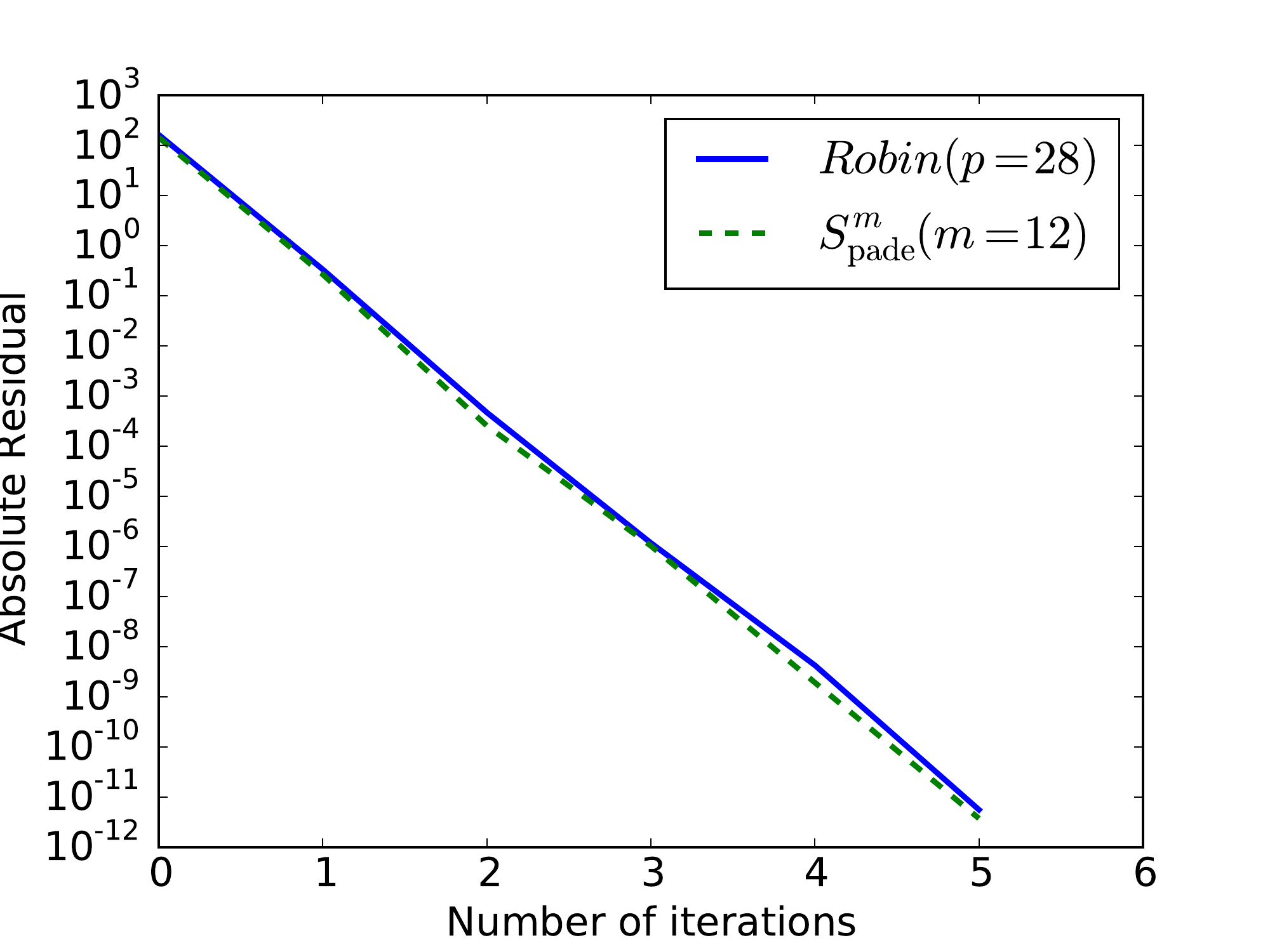}
    \caption{\small Convergence histories of the first time step of the classical algorithm (left) and the preconditioned algorithm for $N=32$. The mesh is $\Delta x = 1/128$, $\Delta y = 1/8$.}
    \label{compa_hist_NL_N32}
  \end{figure}%

  \subsubsection{Influence of parameters}

  In this subsection, we study the influence of parameters in the transmission conditions:
  \begin{itemize}
  \item the parameter $m$ (order of Pad\'{e} approximation) in the transmission condition $S_{\mathrm{pade}}^m$,
  \item the parameter $p$ in the transmission condition Robin.
  \end{itemize}
  The time step and the mesh are fixed to be $\Delta t=0.01$ and $\Delta x=1/128$, $\Delta y=1/8$.

  Firstly, we consider the influence of $m$ in the transmission condition $S_{\mathrm{pade}}^m$. We present in Figure \ref{Influence_m_NL_cls} and in Figure \ref{Influence_m_NL_pd} the number of iterations in relation to the order of Pad\'{e} approximation ($m$) in the framework of the classical and the preconditioned algorithms. Both of the zero vector and the random vector are considered as the initial vector in our tests. 
  \begin{itemize}
  \item
    For the classical algorithm, if the initial vector is the
    zero vector, there exists an optimal parameter $m$. This
    observation is not consistent with our expectations since
    the higher order should make the algorithm converge
    faster. We believe that the zero initial vector gives
    us some inaccurate information. 
  \item
    For the classical algorithm, if the initial vector is a
    random vector, the number of iterations first decreases then
    increases by increasing the order $m$. {We however do not have yet an explanation for the relation between the convergence and the parameter $m$, which needs some more investigations.}
    
  \item
    The parameter $m$ is not very important for the preconditioned algorithm since the preconditioner hides the information about the order.
  \end{itemize}
  \begin{figure}[!htbp]
    \centering
    \includegraphics[width=0.4\textwidth]{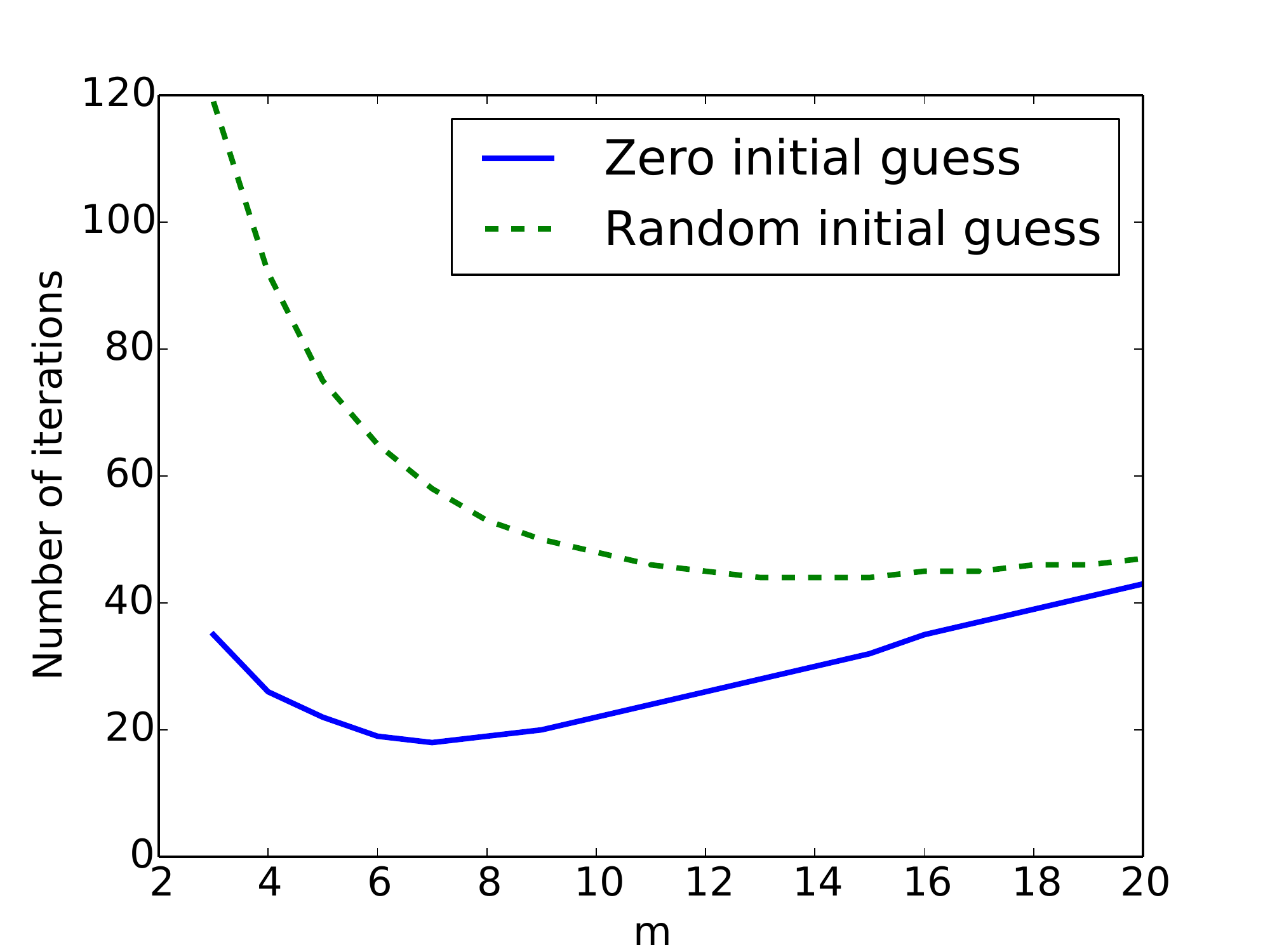}
    \includegraphics[width=0.4\textwidth]{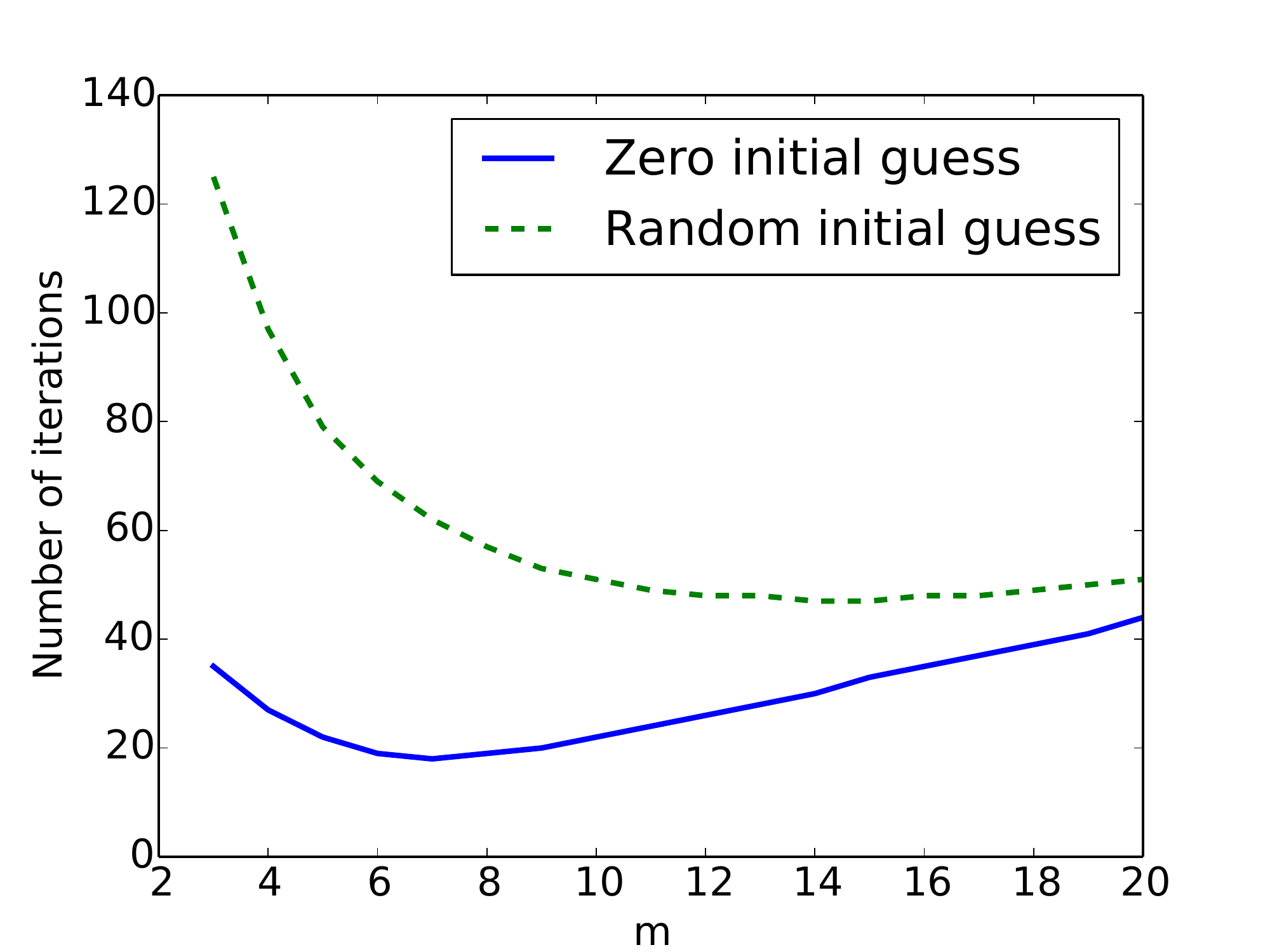}
    \caption{\small Number of iterations vs. parameter $m$ for $N=2$
      (left) and $N=32$ (right) in the framework of the classical
      algorithm.}
    \label{Influence_m_NL_cls}
  \end{figure}%
  \begin{figure}[!htbp]
    \centering
    \includegraphics[width=0.4\textwidth]{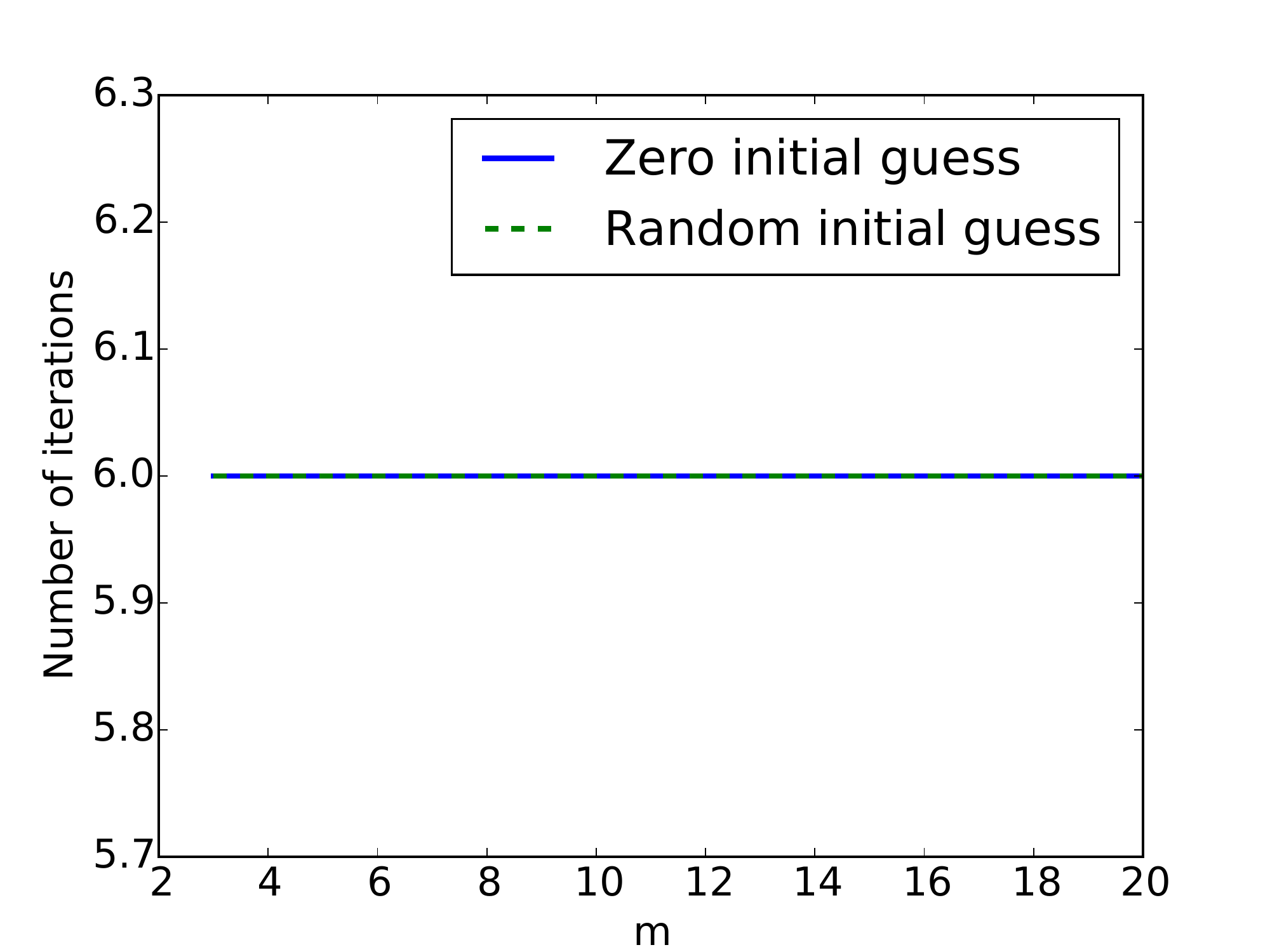}
    \includegraphics[width=0.4\textwidth]{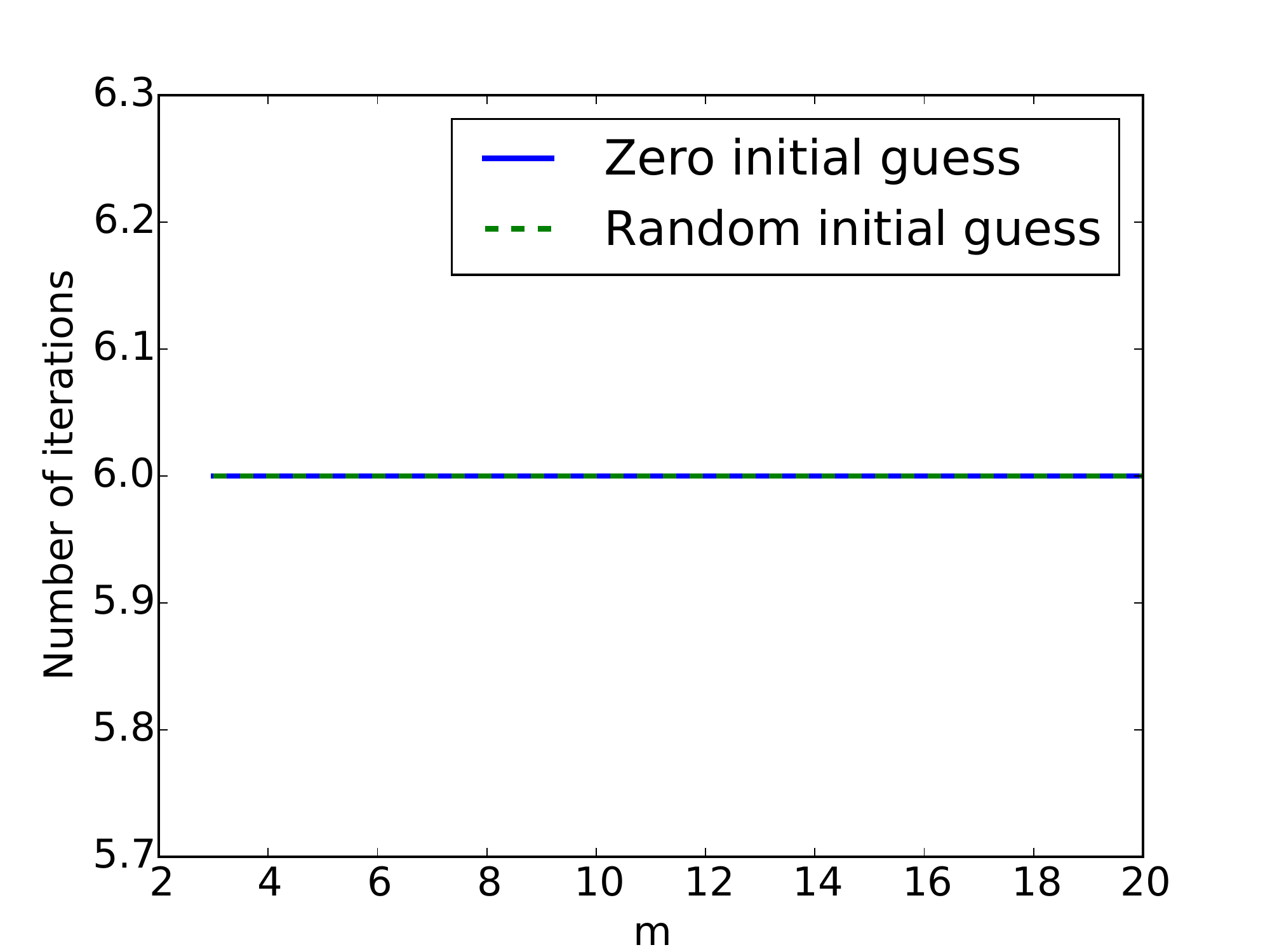}
    \caption{Number of iterations vs. parameter $m$ for $N=2$
      (left) and $N=32$ (right) in the framework of the
      preconditioned algorithm.}
    \label{Influence_m_NL_pd}
  \end{figure}%

  Secondly, we study the influence of $p$ in Robin transmission
  condition. The numbers of iterations are presented in Table
  \ref{Influence_p_NL_2_32} with different $p$ for $N=2$ and
  $N=32$ (here only $p = 5,10,...,50$ are shown). Both the
  classical algorithm and the preconditioned algorithm
  (Cls./Pd.), as well as the different initial vectors (zero or
  random) are considered. As can be seen, for the preconditioned
  algorithm, the number of iterations is almost the same in each
  case. For the classical algorithm, there exists an optimal $p$
  for each case. 
  \begin{table}[!htbp] 
    \centering
    \begin{tabular}{|c|c|c|c|c||c|c|c|c|}
      \hline
      $p$ & \multicolumn{4}{c||}{$N=2$} & \multicolumn{4}{c|}{$N=32$} \\ 
      \hline
          & \multicolumn{2}{c|}{Zero} & \multicolumn{2}{c||}{Random} & \multicolumn{2}{c|}{Zero} & \multicolumn{2}{c|}{Random} \\
      \hline
          & Cls. & Pd. & Cls. & Pd. & Cls. & Pd. & Cls. & Pd.\\
      \hline
      $5$  & 57 & 6 & 548 & 6 & 57 & 6 & 582 & 6 \\
      $10$ & 35 & 6 & 297 & 6 & 35 & 6 & 316 & 6 \\
      $15$ & 32 & 6 & 227 & 6 & 33 & 6 & 241 & 6 \\
      $20$ & 36 & 6 & 200 & 6 & 36 & 6 & 212 & 6 \\
      $25$ & 41 & 6 & 189 & 6 & 41 & 6 & 200 & 6 \\
      $30$ & 46 & 6 & 186 & 6 & 47 & 6 & 200 & 6 \\
      $35$ & 53 & 6 & 188 & 6 & 53 & 6 & 204 & 6 \\
      $40$ & 59 & 6 & 194 & 6 & 60 & 6 & 211 & 6 \\
      $45$ & 66 & 6 & 208 & 6 & 66 & 6 & 223 & 6 \\ 
      $50$ & 73 & 6 & 216 & 6 & 73 & 6 & 234 & 6 \\ 
      \hline
    \end{tabular}
    \caption{\small Number of iterations vs. parameter $p$.}
    \label{Influence_p_NL_2_32}
  \end{table}


  In conclusion, the use of the preconditioner allows to reduce both the number of iterations and the computation time. In addition, the preconditioned algorithm is not sensitive to the transmission conditions as well as the parameters in these transmission conditions.

  \subsection{Simulation of Bose-Einstein condensates}

  In this part, we apply the parallel algorithms to BEC simulation. Before comparing numerically the algorithms and making dynamic simulation of quantized vortex lattices, we recall some facts about BEC.

  \subsubsection{Gross-Pitaevski equation}
  A Bose–Einstein condensate (BEC) is a state of matter of a dilute gas of bosons cooled to temperatures very close to absolute zero. Under such conditions, a large fraction of bosons occupy the lowest quantum state, at which point macroscopic quantum phenomena become apparent. One of the models for BEC is the Gross-Pitaevskii (GPE) equation  \cite{Bao2012bec_review,Bao2005rot,Bao2004groundstate,Antoine2013bec}. In this paper, we consider the GPE equation defined on a bounded spatial domain with the same boundary conditions as \eqref{Sch}:
  \begin{equation}
    \label{GPE}
    \left\{
      \begin{array}{ll}
        i\partial_t u  + \frac{1}{2}\Delta u - V(x,y) u - \beta |u|^2 u + \omega \cdot L_z u = 0, \ (t,x,y)\in (0,T)\times \Omega, \\
        u(0,x,y) = u_0(x,y).
      \end{array} 
    \right.
  \end{equation}
  The constant $\beta$ describes the strength of the short-range two-body interactions (positive for repulsive interaction and negative for attractive interaction) in a condensate. The constant $\omega \in \mathbb{R}$ represents the angular velocity, the $z$-component of the angular momentum $L_z$ is given by
  \begin{displaymath}
    L_z = -i(x\partial_y - y\partial_x).
  \end{displaymath}
  The potential here is 
  \begin{displaymath}
    V(x,y) = \frac{1}{2}( \gamma_x^2 x^2 + \gamma_y^2 y^2),\ \gamma_x, \gamma_y \in \mathbb{R}. 
  \end{displaymath}
  The GPE equation is a type of nonlinear Schr\"{o}dinger equation. One of the difficulties in the simulation of Bose-Einstein condensates derives from the term of rotation.
  Recently, the authors of \cite{Bao2013bec} introduced a coordinate transformation that allows to write the GPE equation in this new coordinates as a nonlinear Schr\"{o}dinger equation \eqref{GPE_Trans} with a time-dependent potential but without the rotation term. Thus, the algorithms that we presented in the previous sections are applicable for GPE equation. For $\forall t \geqslant 0$, the orthogonal rotational matrix $A(t)$ is defined by
  \begin{displaymath}
    A(t) = 
    \begin{pmatrix}
      \cos(\omega t) & \sin(\omega t) \\
      -\sin(\omega t) & \cos(\omega t)
    \end{pmatrix}.
  \end{displaymath}
  The transformed Lagrange coordinate $(\widetilde{x},\widetilde{y})$ is then defined as
  \begin{equation}
    \label{x1Ax}
    \begin{pmatrix}
      \widetilde{x} \\
      \widetilde{y}
    \end{pmatrix}
    = 
    A^{-1}(t)
    \begin{pmatrix}
      x\\
      y
    \end{pmatrix}
    = 
    A^{\top}(t)
    \begin{pmatrix}
      x\\
      y
    \end{pmatrix}.
  \end{equation}
  In this new coordinate, the GPE equation \eqref{GPE} could be written as
  \begin{equation}
    \label{GPE_Trans}
    \left\{
      \begin{array}{ll}
        i\partial_t \widetilde{u}  + \frac{1}{2}\Delta \widetilde{u} - V_t (t,\widetilde{x},\widetilde{y}) \widetilde{u} - \beta |\widetilde{u}|^2 \widetilde{u} = 0, \ t \in (0,T), \\
        \widetilde{u}(0,\widetilde{x},\widetilde{y}) = \widetilde{u}_0(\widetilde{x},\widetilde{y}),
      \end{array} 
    \right.
  \end{equation}
  where
  \begin{equation}
    \label{u1u}
    \widetilde{u}(t,\widetilde{x},\widetilde{y}) := u(t,x,y), \
    V_t(t,\widetilde{x},\widetilde{y}) := V(x,y), \ \text{where}   \ (x,y)^{\top} = A(t) (\widetilde{x},\widetilde{y})^{\top}.
  \end{equation}
  
  Formally, the only difference between the equation
  \eqref{GPE_Trans} and the Schr\"{o}dinger equation \eqref{Sch}
  is the constant in front of the Laplace operator
  $\Delta$. Thus, we could directly apply the domain
  decomposition algorithms to the equation \eqref{GPE_Trans} on
  the spatial domain $\Omega=(x_l,x_r)\times (y_b,y_u)$. {The Robin
    transmission condition and the transmission condition
    $S_{\mathrm{pade}}^m$ are given by \eqref{TCSRobin}, \eqref{TCS}
    and \eqref{TCS_phi}. A minor modification concerns the constant before the operator
    $\Delta_{\Gamma_j}$ in \eqref{TCS_phi} which is $\frac{1}{2}$ here.} 
  
  %

  Once the solution $\widetilde{u}$ is computed numerically, it is possible to reconstruct the solution $u$ by \eqref{u1u}. At time $t$, the computational domain of $\widetilde{u}(t,\widetilde{x},\widetilde{y})$ is $\Omega=(x_l,x_r)\times (y_b,y_u)$ and the computational domain of $u(t,x,y)$ is $A(t)\Omega$ (see figure \ref{AtD}). The domains $A(t)\Omega$ for $t \geqslant 0$ share a common disc. The values of $u(t,x,y)$ within the maximum square (the valid zone) are all in the disc, which could be computed by interpolation. The valid zone is
  \begin{displaymath}
    (\frac{x_l}{\sqrt{2}}, \frac{x_r}{\sqrt{2}}) \times (\frac{y_b}{\sqrt{2}}, \frac{y_u}{\sqrt{2}}).
  \end{displaymath}

  \begin{figure}[!htbp]
    \centering
    \begin{subfigure}{0.35\textwidth}
      \includegraphics[width=\textwidth]{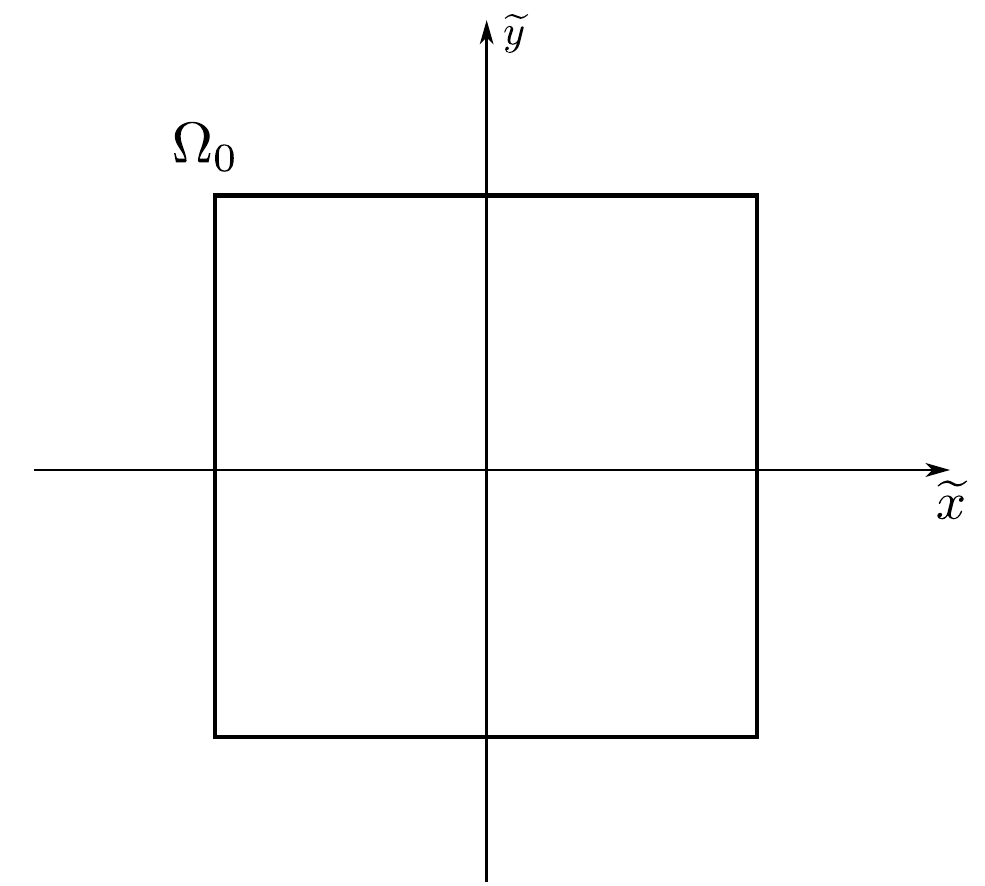}
      \caption{\small $(\widetilde{x},\widetilde{y}) \in \Omega$.}
    \end{subfigure}
    \hspace{0.05\textwidth}
    \begin{subfigure}{0.35\textwidth}
      \includegraphics[width=\textwidth]{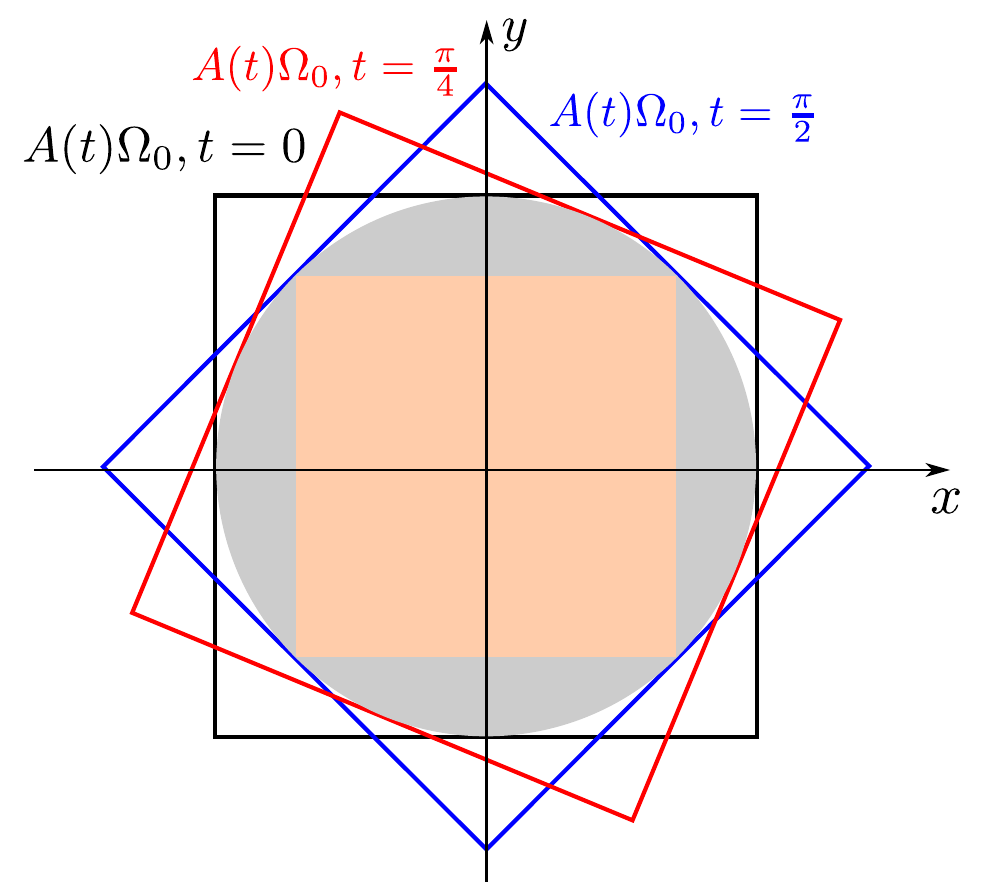}
      \caption{\small $(x,y) \in A(t) \Omega$.}
    \end{subfigure}
    \caption{(a) The computational domain $\Omega$. (b) The domain $A(t) \Omega$ at some different times:  {\color{black}{$t=0$}}, {\color{red}{$t=\pi/4$}} and  {\color{blue}{$t=\pi/2$}} where $\omega=0.5$.}
    \label{AtD}
  \end{figure}

  \subsubsection{Comparison of algorithms}

  In this part, we fix the physical domain to be $\Omega = (-16,16)\times (-16,16)$. The initial datum is taken as a Gaussian 
  \begin{displaymath}
    u_{0}(x,y) = \frac{1}{\pi^{1/4}} e^{\frac{-(x^2+2y^2)}{2}},\ (x,y) \in \mathbb{R}^2,
  \end{displaymath}
  where the coefficients are $\omega=0.4$ and $\beta=10.15$. The
  time step is fixed as $\Delta t = 0.0001$. Firstly, we use a
  wide mesh $\Delta x= \Delta y = 1/32$, which generates $1024
  \times 1024$ unknowns on $\Omega$. It is possible to solve the
  GPE equation \eqref{GPE_Trans} on the complete domain $\Omega$
  under our memory limitation (32G) without using the parallel
  algorithms (classical or preconditioned algorithm). However,
  the computation time could be very long. Thus, we use here a
  small final time $T = 0.1$. Using the same notations as in the
  previous sections, we show in Table \ref{time_dx32} the
  computation times of the two algorithms with Robin and
  $S_{\mathrm{pade}}^m$ transmission conditions. Since the
  boundary condition imposed on $\Omega$ is associated with the
  transmission operator, the reference times $T^{\mathrm{ref}}$
  for the two transmission condition are different. In BEC
  simulation, a small time step is necessary. According to our
  experiments, when a small $\Delta t$ is considered, a large $m$ in $S_{\mathrm{pade}}^m$ transmission
  condition is needed to ensure fast convergence. Thus, the
  use of the transmission condition
  $S_{\mathrm{pade}}^m$ is much more expensive than the
  transmission condition Robin. We can also see that the
  computation times of the classical algorithm
  ($T_{\mathrm{pc}}$) and the preconditioned algorithm
  ($T_{\mathrm{nopc}}$) are scalable.  

  \begin{table}[!htbp]
    \renewcommand{\arraystretch}{1.2}
    \centering
    \begin{tabular}{|c|c|c|c|c|c|c|}
      \hline
      $N$& & 2 & 4 & 8 & 16 & 32 \\
      \hline
      \multirow{3}{*}{Robin, $p=180$}
         & $T^{\mathrm{ref}}$ & \multicolumn{5}{c|}{5.68} \\
      \cline{2-7}
         & $T_{\mathrm{nopc}}$ & 5.68 & 2.66 & 1.28 & 0.68 & 0.33 \\
      \cline{2-7}
         & $T_{\mathrm{pc}}$& 3.49 & 1.60 & 0.77 & 0.44 & 0.24 \\
      \hline
      \multirow{3}{*}{$S_{\mathrm{pade}}^m$, $m=76$} 
         & $T^{\mathrm{ref}}$ & \multicolumn{5}{c|}{8.41} \\
      \cline{2-7}
         & $T_{\mathrm{nopc}}$ & > 20 & 10.70 & 7.40 & 5.07 & 4.23 \\
      \cline{2-7}
         & $T_{\mathrm{pc}}$ & 6.30 & 3.52 & 2.30 & 1.68 & 1.37 \\
      \hline  
    \end{tabular}
    \caption{Computation time in hours with the mesh $\Delta x =\Delta y=1/32$.}
    \label{time_dx32}
  \end{table}

  We make the tests with a finer mesh $\Delta x =1/1024$,
  $\Delta y=1/64$ with the Robin transmission condition 
  {since it has been seen that the implementation with 
  the transmission condition $S_{\mathrm{pade}}^m$ 
  is much more expensive than with the Robin transmission condition in the context of Gross-Pitaevski equation.}
  The complete domain is
  decomposed into $N=128,256,512,1024$ subdomains. The
  computation times are presented in Table \ref{time_dx1024}. We
  could see that the both algorithms are scalable. In addition,
  the preconditioner allows to reduce the total computation
  time. However, since the implementation of the preconditioner
  consumes memory, the memory is not sufficient in the case
  $N=128$. 

  \begin{table}[!htbp]
    \renewcommand{\arraystretch}{1.2}
    \centering
    \begin{tabular}{|c|c|c|c|c|}
      \hline
      $N$& 128 & 256 & 512 & 1024 \\
      \hline
      $T_{\mathrm{nopc}}$, $p=95$ & 19.3 & 8.8 & 5.0 & 2.1 \\
      \hline
      $T_{\mathrm{pc}}$, $p=95$ & * & 2.3 & 1.6 & 0.8\\
      \hline
    \end{tabular}
    \begin{tablenotes}
    \item \small *: the memory is not sufficient.
    \end{tablenotes}
    \caption{Computation time in hours with the mesh $\Delta x =1/1024$, $\Delta y=1/64$.}
    \label{time_dx1024}
  \end{table}


  \subsubsection{Dynamic simulation of quantized vortex lattices}
    {According to the studies in the previous subsection, }
  we apply the algorithms with the Robin transmission condition to study the dynamics of quantized vortex lattices for BEC with rotation. In this simulation, the nonlinear potential and the parameters are
  \begin{displaymath}
    V(x,y) = \frac{1}{2}(x^2 + y^2),\ \beta = 1000,\  \omega = 0.9.
  \end{displaymath}
  The initial solution $u_0$ is a stationary vortex lattice \cite{Bao2012bec_review, Antoine2014gpelab}. The stationary solution $\phi$ of (\ref{GPE}) is defined as 
  \begin{equation}
    \label{uphie}
    u(t,x,y) = \phi(x,y) e^{-i\mu t},
  \end{equation}
  where $\mu$ is the chemical condensation potential. By substituing (\ref{uphie}) in (\ref{GPE}), we have
  \begin{displaymath}
    \mu \phi =  -\frac{1}{2} \phi + V\phi + \beta |\phi|^2 \phi - \omega L_z \phi,
  \end{displaymath} 
  with the constraint of normalisation
  \begin{displaymath}
    || \phi ||^2_2 = \int_{\mathbb{R}^2} |\phi(x,y)|^2 dxdy = 1.
  \end{displaymath}
  This is therefore a nonlinear eigenvalue problem. The eigenvalue $\mu$ can be computed from its corresponding eigenvector $\phi$ by
  \begin{displaymath}
    \mu_{\beta,\omega}(\phi) = E_{\beta,\omega}(\phi) + \frac{\beta}{4} \int_{\mathbb{R}^2} |\phi(x,y)|^4 dxdy,
  \end{displaymath}
  where 
  \begin{equation}
    E_{\beta,\omega}(\phi) = \frac{1}{2} \int_{\mathbb{R}^2} ( |\nabla \phi|^2 + V|\phi|^2 + \beta |\phi|^4 - \omega \overline{\phi} L_z \phi ) dxdy.
  \end{equation}
  The ground state of a BEC is defined as the solution of minimization problem, denoted by $\phi_g$,
  \begin{displaymath}
    E_{\beta,\omega} (\phi_g) = \min_{\phi \in S} E_{\beta,\omega} (\phi),
  \end{displaymath}
  where $S=\{ \phi |\  ||\phi||_2 =1, E_{\beta,\omega} <\infty \}$.
  
  For our simulation, we take the solution of minimization problem as the datum initial
  \begin{equation}
    \label{u0phig}
    u_0(x,y) = \phi_g(x,y).
  \end{equation}
  It is computed by BESP method (Backward Euler Sine
  Pseudospectral) \cite{Bao2006besp} using GPELab
  \cite{Antoine2014gpelab}, a matlab toolbox developed for the
  computation of the ground states and the dynamics of quantum
  systems modeled by GPE equations.

  The complete domain $\Omega= (-16,16) \times (-16,16)$ is decomposed into $N=32$ subdomains. We fix the time step as $\Delta t=0.0001$. The mesh is $\Delta x= \Delta y=1/32$. The parameter $p$ here is $p=180$. Figure \ref{simulation1} shows the contours of the solution $|u(t,x,y)|^2$ at some different times. The solution is illustrated in the valid zone $(-16/\sqrt{2},16/\sqrt{2}) \times (-16/\sqrt{2},(16-\Delta y)/\sqrt{2})$. The total computation time is about 16 hours.

  \begin{figure}[!htbp]
    \centering
    \includegraphics[width=0.35\textwidth]{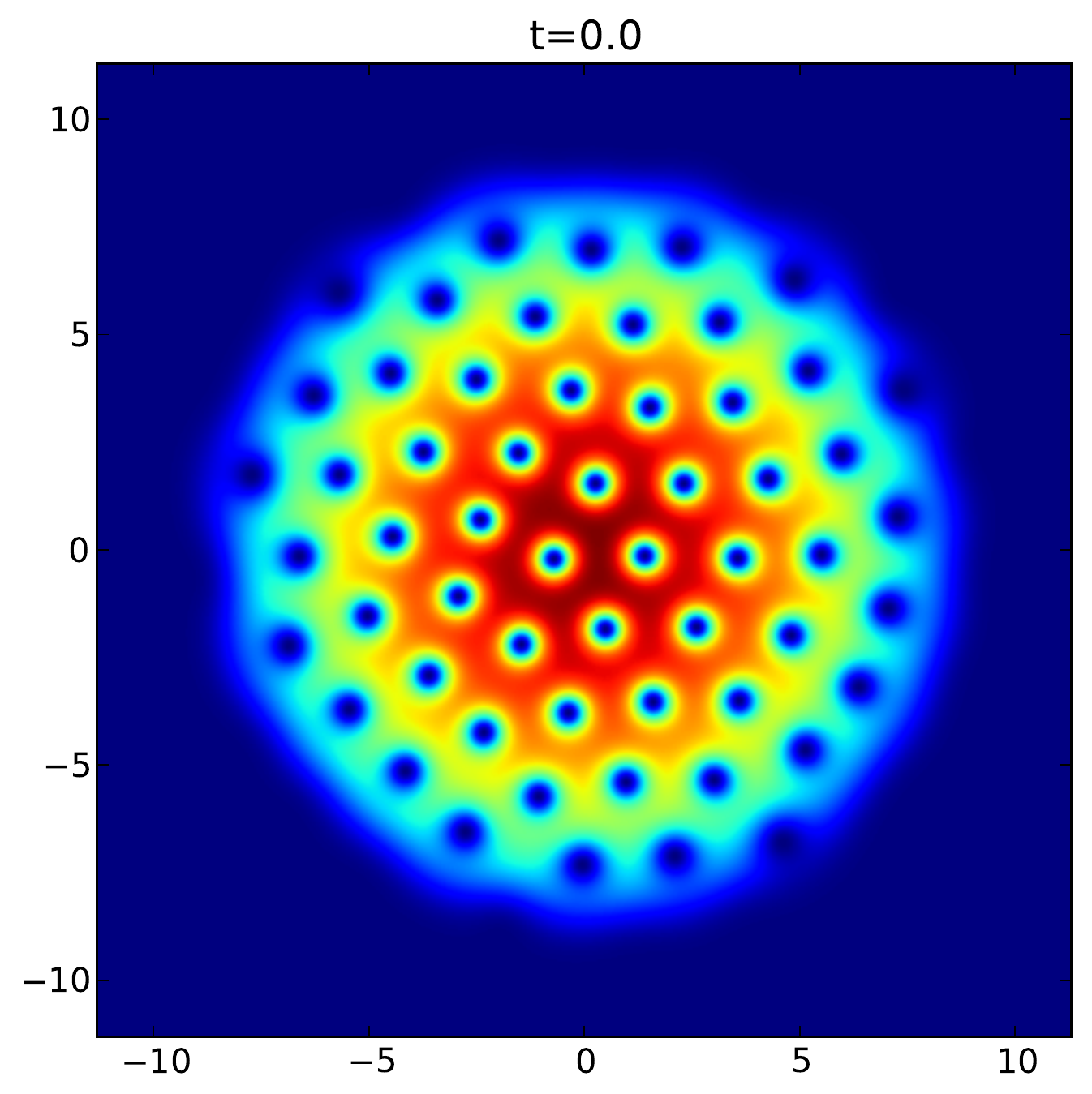}
    \hspace{0.1\textwidth}
    \includegraphics[width=0.35\textwidth]{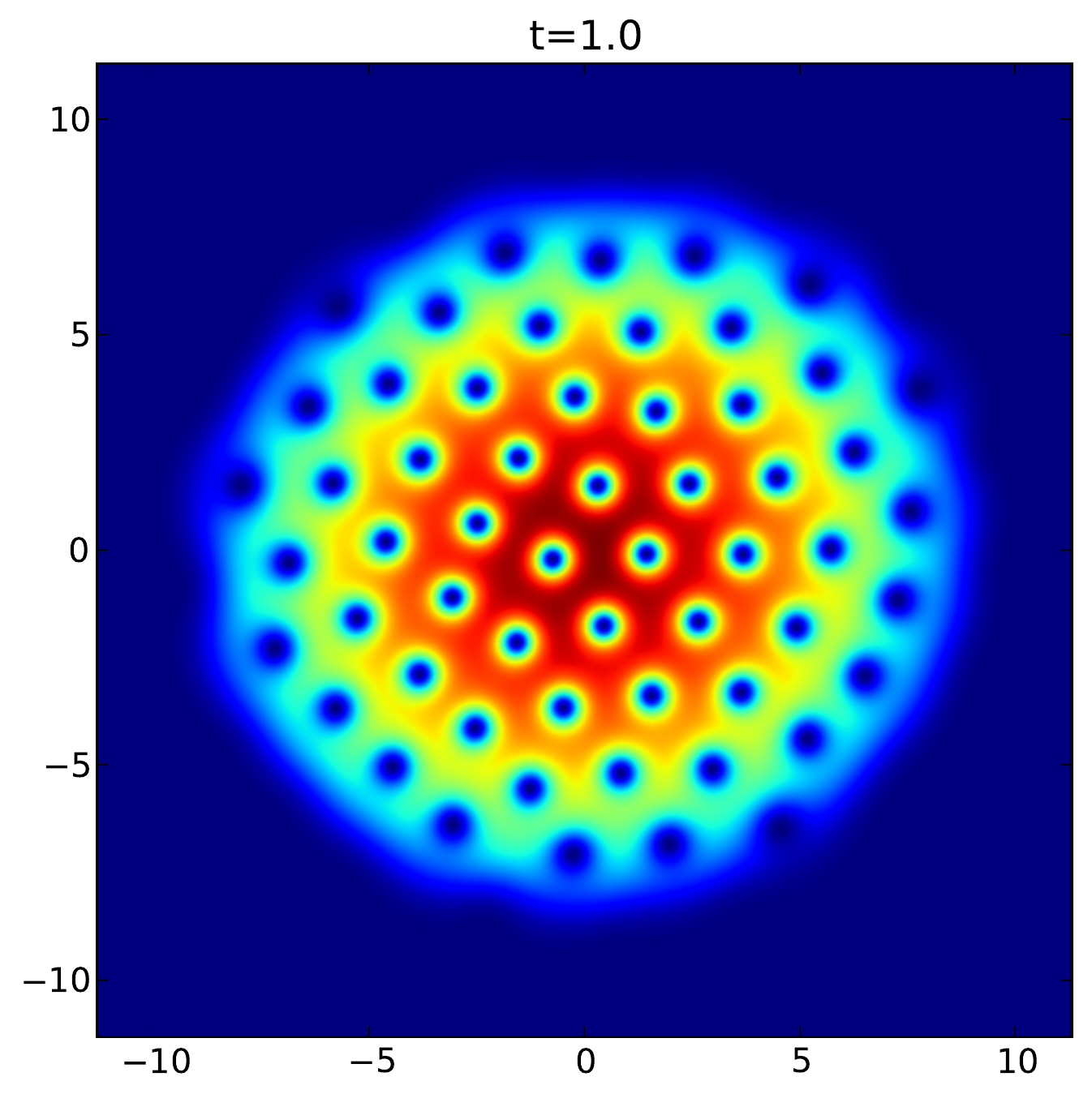}
    \includegraphics[width=0.35\textwidth]{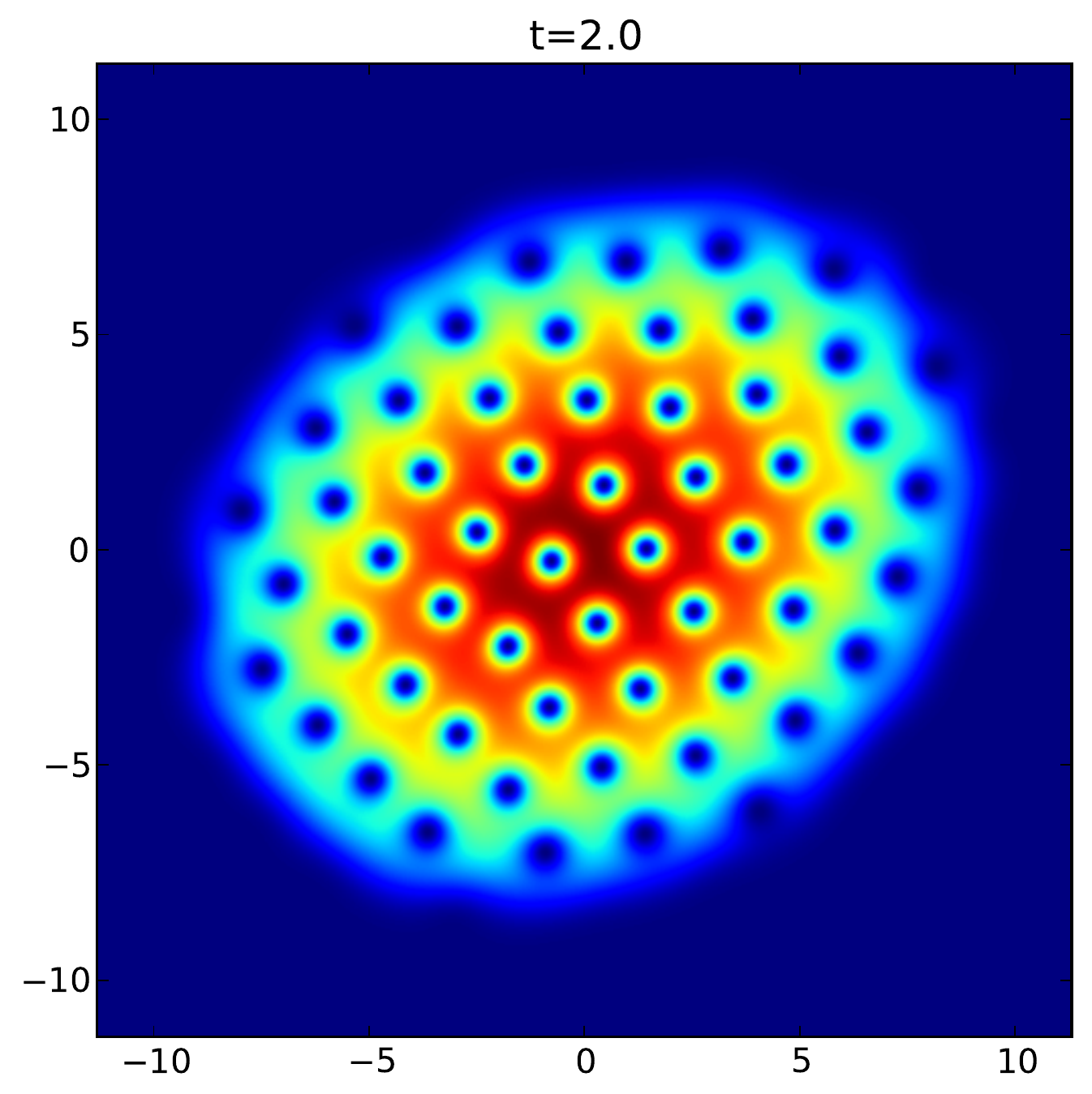}
    \hspace{0.1\textwidth}
    \includegraphics[width=0.35\textwidth]{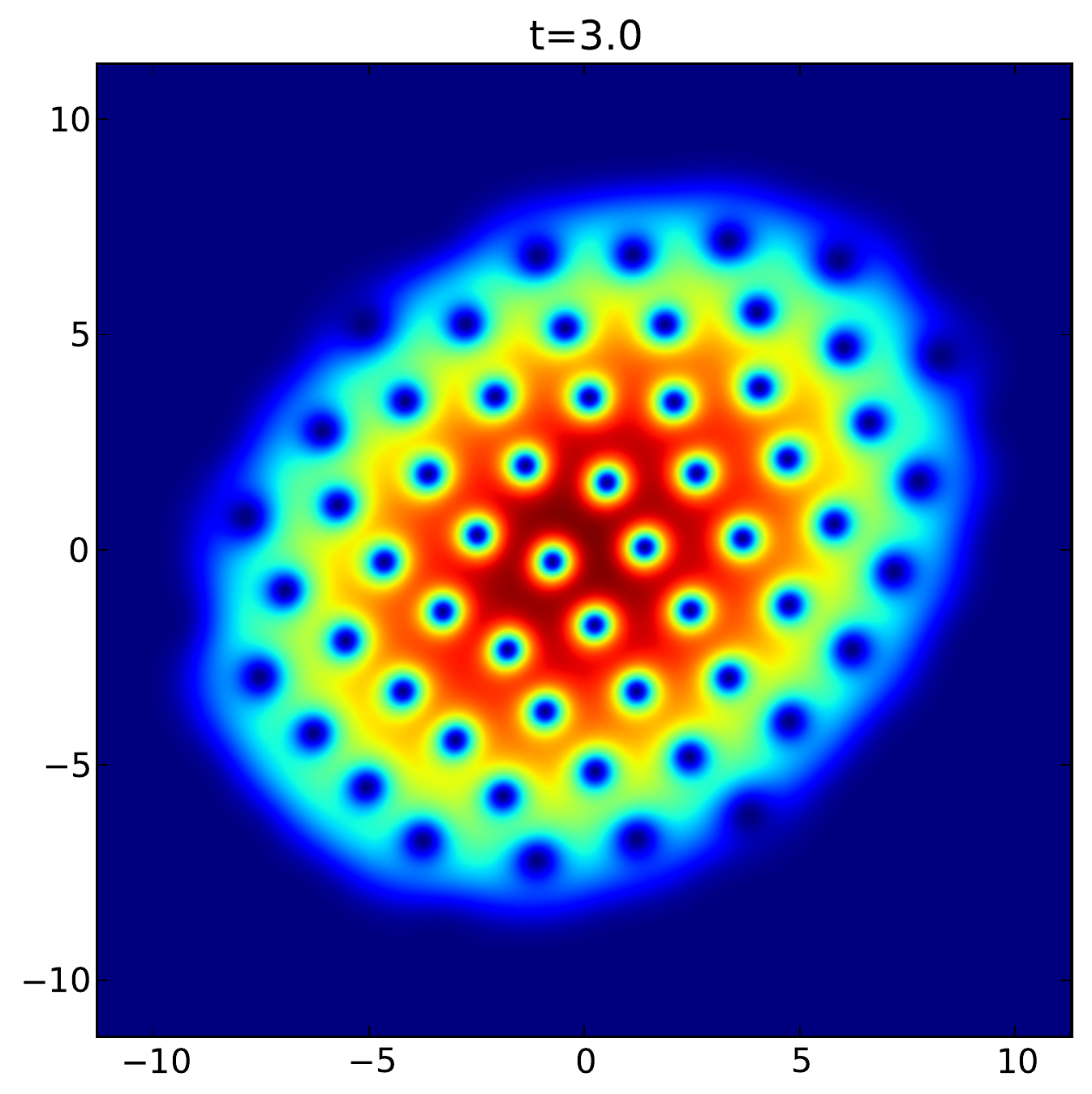}
    \includegraphics[width=0.35\textwidth]{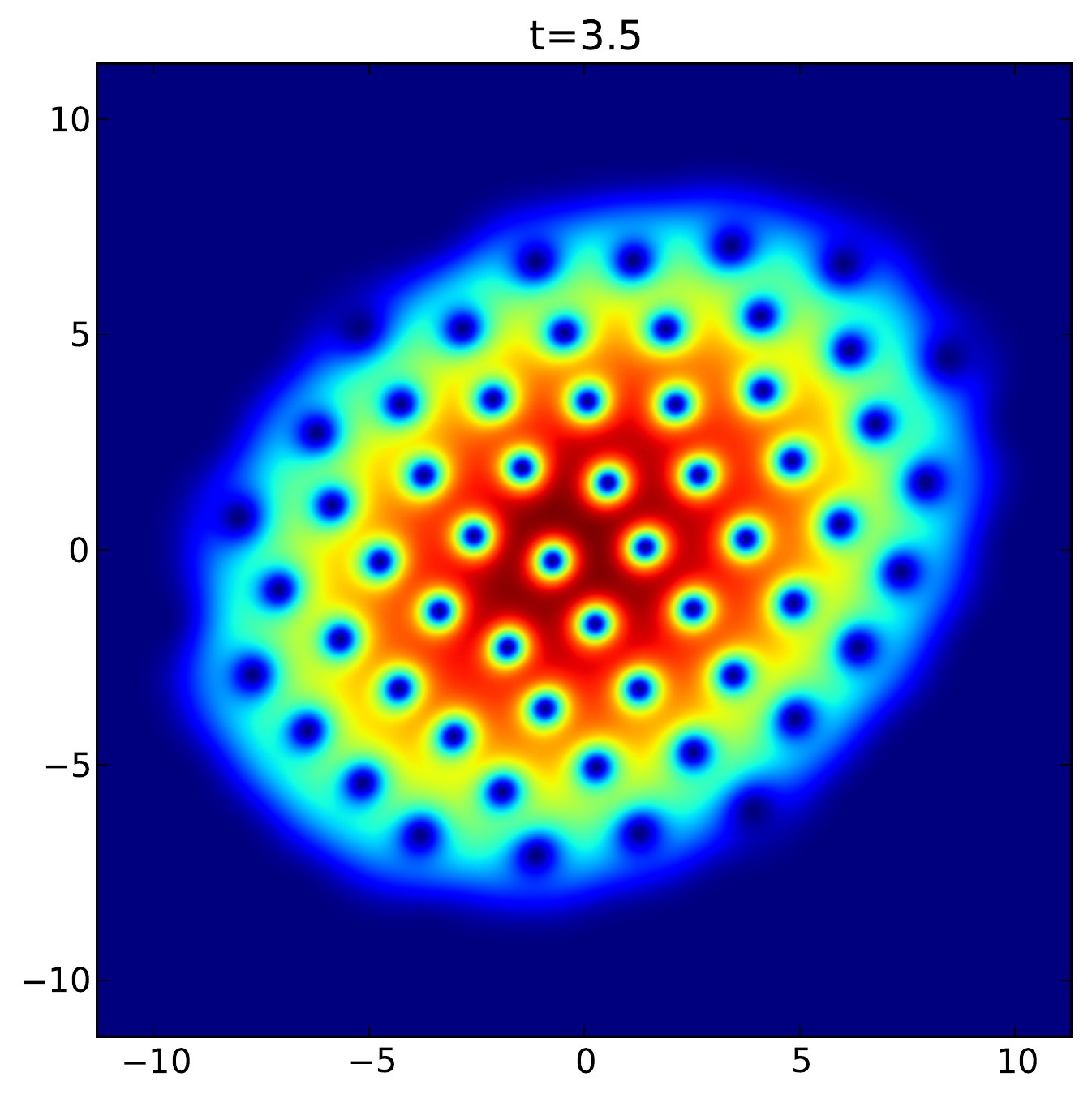}
    \hspace{0.1\textwidth}
    \includegraphics[width=0.35\textwidth]{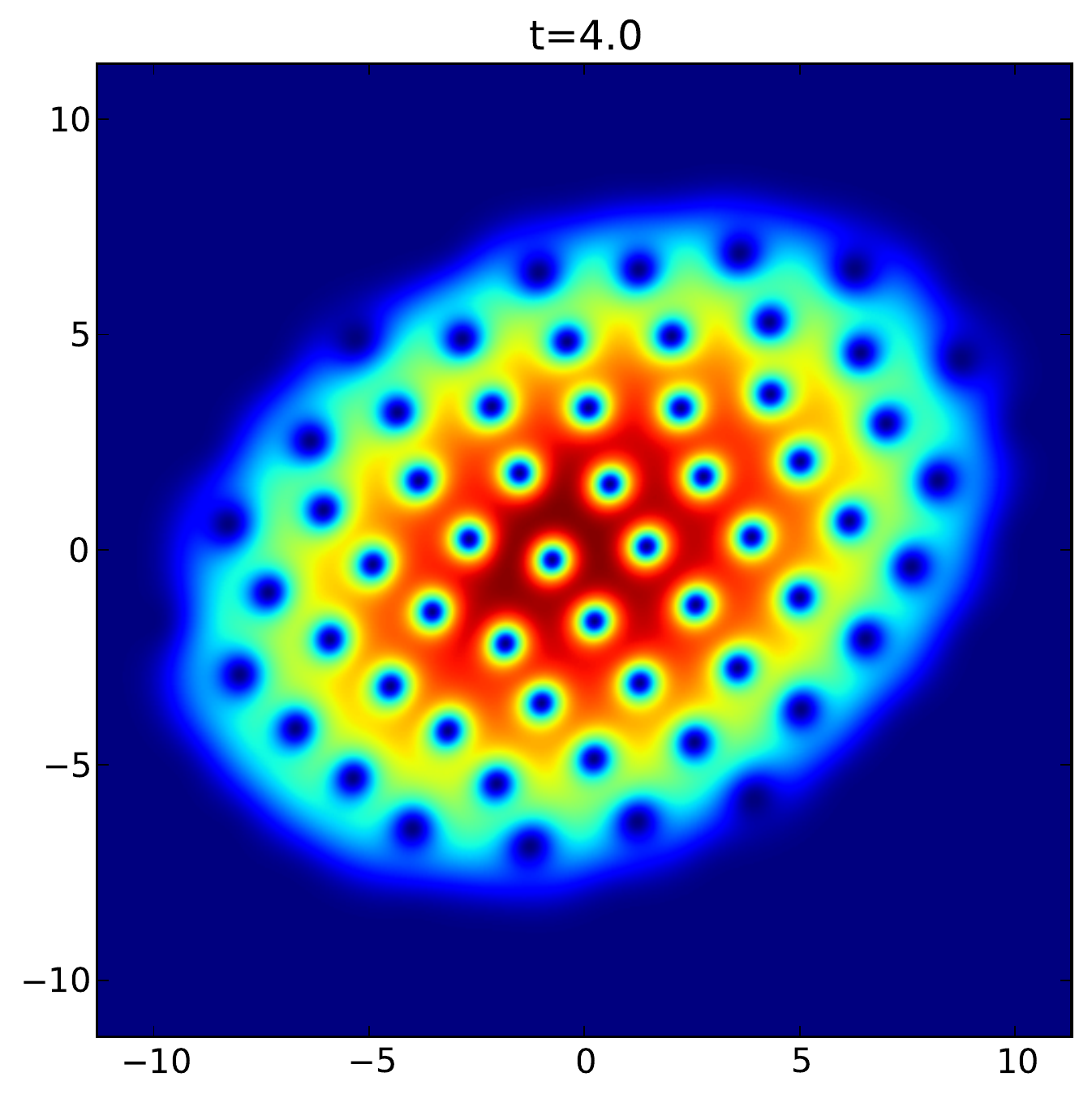}
    \caption{Contours of solution $|u(t,x,y)|^2$ at some different times.}
    \label{simulation1}
  \end{figure}

  \section{Conclusion and perspective}

  We applied the optimized Schwarz method to the two dimensional
  nonlinear Schr\"{o}dinger equation and GPE equation. We
  proposed a preconditioned algorithm which allows to reduce the
  number of iterations and the computation time. According to
  the numerical tests, the preconditioned algorithm is not
  sensitive to the transmission conditions (Robin,
  $S_{\mathrm{pade}}^m$) and the parameters in these
  conditions. In addition, the parallel algorithms are applied
  to the BEC simulation. We can obtain an accurate solution by
  using the parallel algorithms and the computation time of the
  preconditioned algorithm is less than the classical one. 

{
  One perspective could be to use a partially constructed 
  $I-\mathcal{L}_h$ as the preconditioner in the context of 
  the multilevel preconditioner. The construction and the
  implementation should be less expensive.
}

  \section*{Acknowledgements} We acknowledge Pierre Kestener
  (Maison de la Simulation Saclay France) for the discussions
  about the parallel programming. This work was partially
  supported by the French ANR grant ANR-12-MONU-0007-02 BECASIM
  (Mod\`eles Num\'eriques call). The
  first author also acknowledges support from the French ANR grant BonD
  ANR-13-BS01-0009-01.  

  \bibliographystyle{abbrv}
  \bibliography{Bib2}

\end{document}